\patchcmd{\thebibliography}{\chapter*}{\section*}{}{}
\numberwithin{equation}{section} 
\newtheorem*{thmA}{Theorem A}
\newtheorem*{thmB}{Theorem B}
\newtheorem*{thmC}{Theorem C}
\newtheorem*{question*}{Question}
\newtheorem{definition}{Definition}[section]
\newtheorem{theorem}[definition]{Theorem}
\newtheorem{proposition}[definition]{Proposition}
\newtheorem{corollary}[definition]{Corollary}
\newtheorem{lemma}[definition]{Lemma}
\newtheorem{remark}[definition]{Remark}
\newcommand{\Gal}{\mathrm{Gal}}
\newcommand{\A} {\mathbf{A}}
\newcommand{\bZ} {\mathbb{Z}}
\newcommand{\bQ}{\mathbb{Q}}
\newcommand{\Gm}{\mathbb{G}_m}
\def\PP{\mathbb{P}}
\newcommand{\ZZ}{\mathbb{Z}}
\newcommand{\RR}{\mathbb{R}}
\DeclareMathOperator{\Res}{Res}
\DeclareMathOperator{\Hil}{Hil}
\DeclareMathOperator{\Spec}{Spec}
\DeclareMathOperator{\Br}{Br}
\DeclareMathOperator{\Sym}{Sym}
\DeclareMathOperator{\CH}{CH}
\DeclareMathOperator{\Pic}{\textup{Pic}}
\DeclareMathOperator{\NS}{NS}
\DeclareMathOperator{\homology}{H}
\DeclareMathOperator{\pr}{pr}
\newcommand{\Brnr}{\Br_{\mathrm{nr}}}
\begin{document}

	\title[Brauer groups of symmetric products]{Unramified Brauer groups of symmetric products and the Brauer-Manin obstructions}
	\author{Yongqi Liang}
	\author{Xingyu Liu}
	\author{Hui Zhang}
	
	\address{Yongqi Liang
		\newline University of Science and Technology of China,
		\newline School of Mathematical Sciences,
		\newline 96 Jinzhai Road,
		\newline  230026 Hefei, Anhui, China
	}
	\email{yqliang@ustc.edu.cn}
	
	\address{Xingyu Liu
		\newline University of Science and Technology of China,
		\newline School of Mathematical Sciences,
		\newline 96 Jinzhai Road,
		\newline  230026 Hefei, Anhui, China
	}
	\email{tsuki@mail.ustc.edu.cn}
	
	\address{Hui Zhang
		\newline University of Science and Technology of China,
		\newline School of Mathematical Sciences,
		\newline 96 Jinzhai Road,
		\newline  230026 Hefei, Anhui, China
	}
	\email{zhero@mail.ustc.edu.cn}

	\maketitle
	\begin{abstract}
		\noindent This article focuses on smooth, projective, and geometrically integral varieties $X$ defined over a number field $k$ with torsion-free geometric Picard groups. We establish an isomorphism between the Brauer groups of $X$ and its symmetric products. As applications,  we deduce the relationship between the Brauer--Manin obstruction to the Hasse principle and to weak approximation for $0$-cycles of degree $n$ on $X$ and the corresponding obstruction for rational points on smooth projective models of its $n$-fold symmetric product.
	\end{abstract}
	\section{Introduction}
	
	The Brauer group plays a very important role in the study of local-global properties in arithmetic geometry. A key tool is the Brauer–Manin pairing introduced by Manin \cite{Man71}, which provides a systematic obstruction to the existence of rational points and to weak approximation. For a projective variety $X$ defined over a number field $k$, the pairing is defined as
	\[
	\langle \, \cdot\, , \cdot \, \rangle_k : X(\A_{k}) \times \Br(X) \longrightarrow \mathbb{Q}/\mathbb{Z},
	\]
	\[
	\langle (x_v)_v, \, b \rangle_{k} \longmapsto \sum_{v \in \Omega_k} \operatorname{inv}_v\bigl(b(x_v)\bigr).
	\]
	where $\operatorname{inv}_{v}: \Br(k_{v})\rightarrow \bQ/\bZ$ is the local invariant at the place $v$ and $b(x_{v})$ denotes the evaluation of $b$ at the local point $x_{v}$.
	
	The Brauer–Manin set of $X$ is defined as the left kernel of this pairing
	\[
	X(\A_{k})^{\Br} := \biggl\{ (x_v)_v \in X(\A_{k}) \; \big| \; \langle (x_v)_v, b \rangle_k = 0 \text{ for all } b \in \Br(X) \biggr\}.
	\]
	By class field theory, the set $X(k)$ of rational points is contained in $X(\A_{k})^{\Br}$.
	So the condition $X(\A_{k})^{\Br}= \varnothing$ gives an obstruction to the existence of global rational points. We say that  
	\emph{the Brauer–Manin obstruction is the only obstruction to the Hasse principle for rational points} (abbreviated \emph{HPBMrp}) if the implication
	\[
	X(\A_{k})^{\Br} \neq \varnothing \quad \Longrightarrow \quad X(k) \neq \varnothing
	\]
	holds. 
	Similarly, because the Brauer–Manin set is closed, the closure of $X(k)$ satisfies $\overline{X(k)} \subseteq X(\A_{k})^{\Br}$. We say that  
	\emph{the Brauer–Manin obstruction is the only obstruction to weak approximation for rational points} (abbreviated \emph{WABMrp}) if
	\[
	\overline{X(k)} = X(\A_{k})^{\Br}.
	\]
	
	Similarly, for 0-cycles of degree $n$, we can define:
	\begin{enumerate}
		\item\emph{ The Brauer--Manin obstruction is the only obstruction to the Hasse principle for 0-cycles of degree $n$ (abbreviated $\text{HPBM0cyc}^{n}$)}, see Definition~\ref{Definition:HPBM0cyc}.
		\item  \emph{The Brauer--Manin obstruction is the only obstruction to the weak approximation for 0-cycles of degree $n$ (abbreviated $\text{WABM0cyc}^{n}$)}, see Definition~\ref{Definition:WABM0cyc}.
	\end{enumerate}

The first author showed in \cite{Lia13} that for  rationally connected varieties $X$ defined over a number field $k$, if for every finite field extension $L/k$, the HPBMrp (resp.\ WABMrp) holds for $X_{L}$, then $X$ satisfies $\text{HPBM0cyc}^{1}$(resp.\ $\text{WABM0cyc}^{1}$). Such conclusions have also been established with similar method for K3 surfaces by Evis Ieronymou in \cite{Ier21}, and for Kummer varieties by Francesca Balestrieri and Rachel Newton in \cite{BN21}.  

In a different direction, Sheng Chen and Ziyang Zhang  proved in \cite[Theorem 1.3]{CZ24} that the WABMrp  for a rationally connected variety $X$ after suitable field extensions implies the WABMrp for smooth projective models of $\operatorname{Sym}^{n}_{X}$ for all $n$. It is a bit surprising that their argument by fibration method does not give any information about the Brauer group of the symmetric product.

As rational points on the $n$-fold symmetric product $\operatorname{Sym}^{n}_{X}$ of $X$ corresponds bijectively to effective 0‑cycles of degree $n$ on $X$, it is therefore natural to ask:
	\begin{question*}
		Is there any relation between the Brauer--Manin obstruction for rational points on $\Sym_{X}^{n}$ and the Brauer--Manin obstruction for the 0-cycles on $X$?
	\end{question*}
	
In order to answer to the Question, we study the unramified Brauer group of the symmetric product $\Sym_{X}^{n}$ and obtain our main result.
	\begin{thmA}[Theorem~\ref{theorem}]
		Let \(X\) be a smooth, projective, geometrically integral variety defined over a number field $k$ such that 
		\(\Pic(X_{\overline{k}})\) is torsion‑free. Then 
		there is an isomorphism of abelian groups
		\[
		\lambda \colon \Br(X)/\Br(k) \xrightarrow{\ \simeq\ } 
		\Brnr\!\bigl(\Sym_{X}^{n}\bigr)/\Br(k).
		\]
	\end{thmA}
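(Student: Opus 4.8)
The plan is to construct $\lambda$ by symmetrizing pullbacks along the projections and descending along the quotient map, and then to verify it is an isomorphism by comparing the Hochschild--Serre sequences of $X$ and of a smooth projective model of $\Sym^n_{X/k}$. Write $\Gamma = \Gal(\overline{k}/k)$, let $\pi\colon X^n \to \Sym^n_{X/k}$ be the quotient map, and let $p_1,\dots,p_n\colon X^n \to X$ be the projections. For $b \in \Br(X)$ the class $\sum_{i=1}^n p_i^* b \in \Br(X^n)$ is $S_n$-invariant; I would first show that it descends through $\pi$ on the open locus where the $n$ points are distinct (over which $\pi$ is a finite \'etale $S_n$-torsor) and then extends to an unramified class on $\Sym^n_{X/k}$, producing $\lambda(b) \in \Brnr(\Sym^n_{X/k})$. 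Since the pullback of a constant class gives $n$ times that constant, $\lambda$ is well defined modulo $\Br(k)$. Injectivity modulo $\Br(k)$ is then immediate: restricting $\sum_i p_i^* b$ along a section $x \mapsto (x,x_0,\dots,x_0)$ returns $b$ up to a constant, so $\lambda(b) \in \Br(k)$ forces $b \in \Br(k)$.

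The heart of the matter is surjectivity, and here the torsion-freeness of $\Pic(X_{\overline{k}})$ enters decisively: it forces $\Pic^0(X_{\overline{k}}) = 0$ and $\NS(X_{\overline{k}})$ torsion-free, hence $b_1(X_{\overline{k}}) = 0$ and $H^1(X_{\overline{k}},\mu_m) = 0$. By the K\"unneth formula in $\ell$-adic cohomology, the vanishing of $b_1$ kills every mixed term, so that $\Pic(X^n_{\overline{k}}) = \bigoplus_i p_i^* \Pic(X_{\overline{k}})$ and, crucially, $\Br(X^n_{\overline{k}}) = \bigoplus_i p_i^* \Br(X_{\overline{k}})$ as $\Gamma$-modules, with $S_n$ permuting the summands. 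Taking $S_n$-invariants collapses each to the diagonal copy, giving $\Gamma$-equivariant isomorphisms $\Pic(X^n_{\overline{k}})^{S_n} \simeq \Pic(X_{\overline{k}})$ and $\Br(X^n_{\overline{k}})^{S_n} \simeq \Br(X_{\overline{k}})$.

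Next I would pass to a smooth projective model $Y = \Sym^{n,sm}_{X/k}$, so that $\Brnr(\Sym^n_{X/k}) = \Br(Y)$. The symmetric product has only finite-quotient singularities along the diagonal strata, resolved canonically and hence $\Gamma$-equivariantly; the resulting exceptional divisors, together with the branch divisor, span a $\Gamma$-submodule $N \subset \Pic(Y_{\overline{k}})$ which is a permutation module, so $H^1(\Gamma,N)=0$ by Shapiro's lemma. Thus $\Pic(Y_{\overline{k}}) \simeq \Pic(X_{\overline{k}}) \oplus N$ and $H^1(\Gamma, \Pic(Y_{\overline{k}})) \simeq H^1(\Gamma, \Pic(X_{\overline{k}}))$; and since resolution does not change the geometric Brauer group and the added classes are algebraic, $\Br(Y_{\overline{k}}) \simeq \Br(X_{\overline{k}})$ as $\Gamma$-modules. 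Feeding these identifications into the standard sequences
\[
\Br_1(V)/\Br(k) \simeq H^1(\Gamma,\Pic(V_{\overline{k}})),\qquad 0 \to \Br_1(V)/\Br(k) \to \Br(V)/\Br(k) \to \Br(V_{\overline{k}})^{\Gamma},
\]
for $V = X$ and $V = Y$ (using $H^3(\Gamma,\overline{k}^*)=0$ over a number field), and checking that $\lambda$ is compatible with all of the maps, a five-lemma argument yields the isomorphism $\Br(X)/\Br(k) \xrightarrow{\simeq} \Br(Y)/\Br(k) = \Brnr(\Sym^n_{X/k})/\Br(k)$.

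I expect the main obstacle to be the transcendental part: one must check that the image of $\Br(Y)/\Br_1(Y)$ in $\Br(Y_{\overline{k}})^{\Gamma}$ matches that of $\Br(X)/\Br_1(X)$ in $\Br(X_{\overline{k}})^{\Gamma}$, i.e.\ that the Hochschild--Serre differentials $d_2\colon \Br(V_{\overline{k}})^{\Gamma} \to H^2(\Gamma,\Pic(V_{\overline{k}}))$ agree under the identifications above. The danger is precisely the extra summand $N$: although $H^1(\Gamma,N)=0$, the group $H^2(\Gamma,N)$ is large, so I must show that the differential for $Y$ has trivial component in $H^2(\Gamma,N)$ --- which is where compatibility with the correspondence $\sum_i p_i^*$ is essential, since that correspondence factors through $X^n$ and never meets the exceptional locus. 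A secondary technical point, needed to make the descent in the construction of $\lambda$ rigorous, is the residue computation showing $\lambda(b)$ is unramified along the diagonal and branch divisors; this is automatic when $\dim X \ge 2$, where the diagonal has codimension $\ge 2$ in $\Sym^n_{X/k}$ and purity applies, and requires a direct check only in the curve case, where $\Sym^n_{X/k}$ is already smooth.
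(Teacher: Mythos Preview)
Your approach has a genuine gap in the surjectivity argument. The claim that the exceptional locus of a resolution $Y \to \Sym^n_{X/k}$ contributes a \emph{permutation} $\Gamma$-module $N$ to $\Pic(Y_{\overline k})$ is unjustified: for $\dim X \ge 3$ the singular strata of $\Sym^n_X$ are indexed by partitions with varying stabilisers, and even a functorial resolution gives no a priori control over the Galois-module structure of the geometric components of the exceptional divisors. Without $H^1(\Gamma,N)=0$ the Hochschild--Serre comparison of $\Br_1$ collapses. You also correctly flag, but do not resolve, the transcendental difficulty: matching the differentials $d_2\colon \Br(V_{\overline k})^{\Gamma}\to H^2(\Gamma,\Pic(V_{\overline k}))$ requires showing that the component landing in $H^2(\Gamma,N)$ vanishes, and the remark that the correspondence ``never meets the exceptional locus'' is not an argument---the differential is a global spectral-sequence map, not a residue along $N$.

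The paper sidesteps all of this with a fibration trick. It replaces $X$ by $X\times\PP^1$ (whose symmetric product is stably birational to that of $X$), fibres $\Sym^n(X\times\PP^1)\to\Sym^n(\PP^1)=\PP^n$, and identifies the generic fibre as the Weil restriction $\Res_{L/k(\xi)}(X_L)$ for an explicit degree-$n$ extension $L/k(\xi)$. The Brauer comparison for Weil restrictions is then a clean Shapiro-lemma computation, and injectivity of $\Brnr(\Sym^n)/\Br(k)\hookrightarrow \Br(\text{generic fibre})/\Br(k(\xi))$ holds because the fibration is split over every codimension-$1$ point of $\PP^n$. No resolution of $\Sym^n_X$, and no knowledge of its Picard group, is ever needed.
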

As an arithmetic application, partially answering to the Question, we relate the Brauer--Manin obstruction for 0-cycles on $X$ with that for rational points on symmetric products.

	\begin{thmB}[Theorem~\ref{thm:main}]
	Let  \(X\) be a smooth, projective, geometrically integral variety defined over a number field \(k\) such that \(\Pic(X_{\overline{k}})\) is torsion-free. 
	\begin{enumerate}[(a)]
		\item The canonical identification
		\[
		\mathrm{Z}_{0,\A}^{n,\mathrm{sep},\mathrm{eff}}(X)=\prod_{v\in\Omega_k}\Sym_{X}^{n,o}(k_v)
		\]
		induces an identification
		\[
		\mathrm{Z}_{0,\A}^{n,\mathrm{sep},\mathrm{eff}}(X)^{\Br}= 
		\Bigl(\prod_{v\in\Omega_k}\Sym_{X}^{n,o}(k_v)\Bigr)^{\Brnr}.
		\]
		In particular, 
\begin{itemize}
\item[-] the variety \(\Sym_{X}^{n,o}\) satisfies HPBMrp with respect to the unramified Brauer group if and only if \(X\) satisfies $\text{HPBM0cyc}^{n,\mathrm{eff,sep}}$; 
\item[-] if \(\Sym_{X}^{n,o}\) satisfies the WABMrp, then \(X\) satisfies the $\text{WABM0cyc}^{n,\mathrm{eff,sep}}$.
\end{itemize}[As indicated by the notation, separable effective 0-cycles are considered in this statement, cf.  Section \ref{Section:Notation} for full details.]
		\item Assume moreover that \(\Br(X)/\Br(k)\) is finite. If a smooth projective model \(\Sym_{X}^{n,\mathrm{sm}}\) of the symmetric product satisfies HPBMrp (resp. WABMrp) with respect to the unramified Brauer group for $n$ sufficiently large, then \(X\) satisfies $\text{HPBM0cyc}^{1}$(resp. $\text{WABM0cyc}^{1}$).
	\end{enumerate}
	\end{thmB}

This particular family of varieties under consideration in both theorems includes rationally connected varieties, K3 surfaces and Kummer varieties, and any finite product with factors chosen among them. As another arithmetic application of Theorem A, our Theorem \ref{thm:generalization} also recovers the result \cite[Theorem 1.3]{CZ24} for rationally connected varieties, and indeed extends their result to our particular family of varieties. 

On the way to the proofs of the above theorems, we also obtain the following byproduct which is certainly of independent interest. It partially recovers a recent result of Sheng Chen and Kai Huang \cite{CHto}.

\begin{thmC}[Proposition~\ref{Proposition:WeirestricionBrauer}(c), Corollary~\ref{Corollary:identityofWeil}]
Let $l/k$ be an extension of number fields.
Let  \(X\) be a smooth, projective, geometrically integral variety defined over \(l\) with torsion-free geometric Picard group. Denote by $X'=\textup{Res}_{l/k}(X)$ its Weil restriction. 
Then the natural homomorphisms 
$$\Br_1(X)/\Br(l)\buildrel{\simeq}\over\to\Br_1(X')/\Br(k)$$
$$\Br(X)/\Br(l)\buildrel{\simeq}\over\to\Br(X')/\Br(k)$$ are isomorphisms.
As a consequence,  we have $X(\A_{l})^{\Br_1}=X'(\A_{k})^{\Br_1}$ and $X(\A_{l})^{\Br}=X'(\A_{k})^{\Br}$.
\end{thmC}
The question of  comparison of various obstruction subsets of $X$ and $X'$ raised by Yang Cao and the first author in \cite[paragraph before Theorem 1.1]{CL22} originates further back to an earlier paper of Jean-Louis Colliot-Th\'el\`ene and Bjorn Poonen \cite[Remark 5, page 95]{CTPoonen}. Many authors have contributed to this question, cf. \cite[Proposition 5.15]{Stoll07}, \cite{CL22}, and \cite{CHto}.
However, these existing results are all completed by descent arguments. To the best of the knowledge of the authors, the isomorphism between the Brauer groups of $X$ and $X'$ was not established in the literature.

\subsection*{Organization of the paper}\ \\

We firstly recall some notation and preliminaries in Section~\ref{Section:Notation}. 

In Section~\ref{Section:varieties}, we then turn to the study of varieties with torsion-free geometric Picard group and give the proof of Theorem C on Weil restrictions.

In Section~\ref{Section:Symmetricproduct}, we study symmetric products and define a homomorphism from the Brauer group of $X$ to the unramified Brauer group of its $n$-fold symmetric product. Moreover, we show that, in a relative version, the generic fiber of the induced morphism between symmetric products can be interpreted as a certain Weil restriction of the generic fiber of the structure morphism. Therefore, the result of Section~\ref{Section:varieties} can be applied to a trivial fibration to prove Theorem A. 

Finally, we discuss arithmetic applications to Brauer--Manin obstructions in Section~\ref{Section:Brauermanin}.

	\section{Notation and Preliminaries}\label{Section:Notation}
	In this paper, a field $k$ is always of characteristic 0 if not explicitly mentioned. We fix an algebraic closure $\overline{k}$ of $k$. The absolute Galois group $\textup{Gal}(\overline{k}/k)$ is denoted by $\Gamma_k$.
	
	For a quasi-projective variety $X$ over $k$, a \emph{smooth projective model} of $X$ is a smooth projective variety $Y$ over $k$ that contains $X$ as a dense open subset. The existence of such a model is guaranteed by Nagata's compactification theorem followed by Hironaka's resolution of singularities.

	For the definition of the Brauer–Manin pairing for 0-cycles, see \cite[Chapter 6]{CTS21}. Let $\mathrm{Z}_{0}^{n}(X)$ (respectively $\mathrm{Z}_{0,\A_{k}}^{n}(X)$ when $k$ is a number field) denote the set of 0-cycles of degree $n$ (respectively adelic 0-cycles of degree $n$) on $X$. 
	
	\begin{definition}\label{Definition:HPBM0cyc}
		We say that the \emph{Brauer–Manin obstruction is the only obstruction to the Hasse principle for 0-cycles of degree $n$} (abbreviated \emph{$\text{HPBM0cyc}^n$}), if 
		\[
		\mathrm{Z}_{0,\A_{k}}^{n}(X)^{\Br} \neq \varnothing
		\quad \Longrightarrow \quad
		\mathrm{Z}_{0}^{n}(X) \neq \varnothing.
		\]
	\end{definition}
	Usually, one says that \emph{$X$ satisfies HPBM for 0-cycles} if it satisfies $\text{HPBM0cyc}^{1}$. For weak approximation, we follow the definition given in \cite{Lia13}. 
	\begin{definition}\label{Definition:WABM0cyc}
		We say that \emph{the Brauer–Manin obstruction is the only obstruction to weak approximation for 0‑cycles of degree $n$} (abbreviated \emph{$\text{WABM0cyc}^{n}$}) if the following holds: for every positive integer $\delta$ and every finite subset $S\subset\Omega_k$, given any family $\{z_v\}$ of local 0‑cycles of degree $n$ that is orthogonal to $\operatorname{Br}(X)$ (i.e. $\{z_v\}\perp \operatorname{Br}(X)$), there exists a global 0‑cycle $z_{\delta,S}$ of degree $n$ such that for every $v\in S$ the images of $z_{\delta,S}$ and $z_v$ in $\CH_0(X_{k_v})/\delta$ coincide.  
	\end{definition}

	Recall that  a 0-cycle $z=\sum_{P}n_{P}P$ is effective if $n_{P}\geq0$ for all $P$. It is  called \emph{effective separable} if each coefficient $n_P$ is either 0 or 1. For such 0-cycles we respectively define \emph{$\text{HPBM0cyc}^{n,\mathrm{eff,sep}}$} and \emph{$\text{WABM0cyc}^{n,\mathrm{eff,sep}}$} in a similar way.

	
	A separable extension $K/k$ of fields is called \emph{regular} if $k$ is algebraically closed in $K$. When $K/k$ is regular, for any algebraic extension $L/k$, the tensor product $K \otimes_k L$ remains a field. We denote it by $K.L$ when the base field $k$ is clear. For further details on regular extensions, see \cite[Chap.~8, Sec.~4]{Lan02}.

	\begin{lemma}\label{Lemma:fieldextension}
		For a regular extension $K/k$, we have an exact sequence of Galois groups
		\begin{equation*}
			0\rightarrow \Gamma_{K.\overline{k}}\rightarrow \Gamma_{K} \rightarrow \Gamma_{k}\rightarrow 0
		\end{equation*}
	\end{lemma}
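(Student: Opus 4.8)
The plan is to realize all three groups as absolute Galois groups inside one fixed algebraic closure and to identify the two arrows with a subgroup inclusion and a restriction homomorphism. First I would fix an algebraic closure $\overline{K}$ of $K$; since $K\supseteq k$, take $\overline{k}$ to be the algebraic closure of $k$ inside $\overline{K}$, so that the compositum $K\overline{k}=K.\overline{k}$ also sits in $\overline{K}$. As $\overline{K}$ is algebraic over $K$, hence over $K\overline{k}$, it is simultaneously an algebraic closure of $K\overline{k}$; thus $\Gamma_{K.\overline{k}}=\Gal(\overline{K}/K\overline{k})$ is a closed subgroup of $\Gamma_{K}=\Gal(\overline{K}/K)$, which supplies the left-hand injection. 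For the right-hand map, any $\sigma\in\Gamma_{K}$ fixes $K$ and therefore $k$ pointwise, so it permutes the elements of $\overline{K}$ algebraic over $k$; that set is precisely $\overline{k}$, and $\sigma\mapsto\sigma|_{\overline{k}}$ defines the restriction homomorphism $\Gamma_{K}\to\Gamma_{k}=\Gal(\overline{k}/k)$.

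Granting this setup, exactness at the first two spots is formal. The inclusion $\Gamma_{K.\overline{k}}\hookrightarrow\Gamma_{K}$ is injective by construction, and the restriction map kills exactly those $\sigma$ that fix $\overline{k}$ pointwise; since any such $\sigma$ already fixes $K$, it fixes the whole compositum $K\overline{k}$, so the kernel is $\Gal(\overline{K}/K\overline{k})=\Gamma_{K.\overline{k}}$. This yields exactness at $\Gamma_{K.\overline{k}}$ and at $\Gamma_{K}$.

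The one substantial point, and the only place regularity is used, is the surjectivity of $\Gamma_{K}\to\Gamma_{k}$. Here I would invoke the fact recalled just before the statement that $K/k$ regular forces $K\otimes_{k}\overline{k}$ to be a field, i.e. $K$ and $\overline{k}$ are linearly disjoint over $k$. Given $\tau\in\Gamma_{k}$, the automorphism $\id_{K}\otimes\tau$ of $K\otimes_{k}\overline{k}$ transports, under the identification $K\otimes_{k}\overline{k}\simeq K\overline{k}$, to a field automorphism $\widetilde{\tau}$ of $K\overline{k}$ fixing $K$ pointwise with $\widetilde{\tau}|_{\overline{k}}=\tau$. By the isomorphism-extension theorem, $\widetilde{\tau}$ prolongs to an automorphism $\sigma$ of $\overline{K}$; then $\sigma\in\Gamma_{K}$ and $\sigma|_{\overline{k}}=\tau$, so the restriction map is onto. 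I expect this surjectivity to be the main obstacle: without regularity the tensor product $K\otimes_{k}\overline{k}$ may fail to be a field, $\id_{K}\otimes\tau$ need not descend to an automorphism of a single compositum, and the restriction $\Gamma_{K}\to\Gamma_{k}$ can genuinely fail to be surjective; everything else is bookkeeping within the tower $k\subseteq K\subseteq K\overline{k}\subseteq\overline{K}$.
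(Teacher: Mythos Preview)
Your proof is correct and follows essentially the same route as the paper: both set up the tower $K\subset K.\overline{k}\subset\overline{K}$ and use that regularity of $K/k$ makes $K$ and $\overline{k}$ linearly disjoint over $k$, so that restriction identifies $\Gal(K.\overline{k}/K)$ with $\Gamma_k$. The paper merely sketches this and declares the conclusion, whereas you spell out the surjectivity via $\id_K\otimes\tau$ on $K\otimes_k\overline{k}\simeq K\overline{k}$ and an isomorphism extension; but the underlying argument is the same.
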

	\begin{proof}[Sketch of proof]
		Consider the tower of field extensions  
		\[
		K \subset K.\overline{k} \subset \overline{K}.
		\]  
		Since \(K/k\) is regular, the extensions \(K\) and \(\overline{k}\) are linearly disjoint over \(k\). Consequently,  
		\[
		\operatorname{Gal}(K.\overline{k}/K) \simeq \Gamma_k,
		\]  
		and the desired conclusion follows.
	\end{proof}
	Recall that a variety $X$ over $k$ is called \emph{split} if it contains an open geometrically integral $k$-subscheme. 
	
	Assume that $X$ is a proper and geometrically integral variety over a field $k$. The Leray spectral sequence
	\[
	H^{p}(k, H^{q}(X_{\overline{k}},\mathbb{G}_{m})) \Rightarrow H^{p+q}(X,\mathbb{G}_{m})
	\]
	yields  long exact sequences
	\begin{equation}\label{sequence2.1}
	\begin{aligned}
		0  \rightarrow \operatorname{Pic}(X) \rightarrow \operatorname{Pic}(X_{\overline{k}})^{\Gamma_{k}} &\rightarrow \operatorname{Br}(k) \rightarrow \operatorname{Br}_{1}(X) \\
		& \rightarrow \homology^{1}(k, \operatorname{Pic}(X_{\overline{k}})) \rightarrow \ker\!\big(\homology^{3}(k,\mathbb{G}_{m}) \rightarrow \homology^{3}(X,\mathbb{G}_{m})\big)
	\end{aligned}
	\end{equation}
	and 
	\begin{equation}\label{sequence2.2}
		0\rightarrow \Br_{1}(X)/\Br(k)\rightarrow \Br(X)/\Br(k) \rightarrow \Br(X_{\overline{k}})^{\Gamma_{k}} \rightarrow \homology^{2}(k,\Pic(X_{\overline{k}})).
	\end{equation}
	
	The sequence \eqref{sequence2.1} naturally induces a monomorphism \begin{equation}\label{epsilon}\epsilon_{X}:\Br_{1}(X)/\Br(k)\rightarrow \homology^{1}(k,\Pic(X_{\overline{k}})).\end{equation} 
	
	It is well-known that the homomorphism $\epsilon_X$  is surjective when $k$ is a number field, or when $X$ has index $1$ (i.e. admits a 0-cycle of degree 1).

	\section{Varieties with torsion-free geometric Picard groups}\label{Section:varieties}
	\subsection{Preliminaries}
	We now focus on smooth, geometrically integral varieties $X$ whose geometric Picard groups $\Pic(X_{\overline{k}})$ are torsion-free. 
	\begin{remark}\label{Remark:torsionfree}
		\begin{enumerate}[(a)]\label{Remark:Picardtorsionfree}
			\item For a smooth, geometrically integral variety $X$ over a field $k$, the condition that $\Pic(X_{\overline{k}})$ is torsion-free is equivalent to each of the following:
			\begin{itemize}
				\item $\homology^{1}(X_{\overline{k}},\bQ/\bZ)=0$;
				\item $\homology^{1}(X,\mathcal{O}_{X})=0$ and the Néron–Severi group $\NS(X_{\overline{k}})$ is torsion-free.
			\end{itemize}
			To see this, consider the exact sequence
			\[
			0\longrightarrow \textbf{\textup{Pic}}_{X/k}^{0}(\overline{k}) \longrightarrow \Pic(X_{\overline{k}}) \longrightarrow \NS(X_{\overline{k}}) \longrightarrow 0,
			\]
			where $\textbf{\textup{Pic}}_{X/k}^{0}(\overline{k})$ is the set of $\overline{k}$-rational points of the Picard variety (an abelian variety). If $\Pic(X_{\overline{k}})$ is torsion-free, then its subgroup $\textbf{\textup{Pic}}_{X/k}^{0}(\overline{k})$ must also be torsion-free. However, for every integer $n>0$, the $n$-torsion subgroup $\textbf{\textup{Pic}}_{X/k}^{0}(\overline{k})$ is isomorphic to $(\bZ/n\bZ)^{\oplus 2g}$ (see \cite[\href{https://stacks.math.columbia.edu/tag/0BF9}{Tag 0BF9}]{stacks-project}). Hence $g=\dim(\textbf{\textup{Pic}}_{X/k}^{0})=0$, so $\textbf{\textup{Pic}}_{X/k}^{0}(\overline{k})$ is trivial. Consequently, $\homology^{1}(X,\mathcal{O}_{X})=0$ since $\dim \homology^{1}(X,\mathcal{O}_{X}) = \dim \textbf{\textup{Pic}}_{X/k}^{0}$, and the sequence yields an isomorphism $\Pic(X_{\overline{k}}) \simeq \NS(X_{\overline{k}})$, which is a finitely generated torsion-free abelian group.
			
			\item The class of varieties with torsion-free geometric Picard group includes many interesting examples:
			\begin{itemize}
				\item rationally connected varieties;
				\item K3 surfaces (see \cite[Chapter 1, Proposition 2.4]{Huy16});
				\item Kummer varieties (see \cite[Proposition 2.3]{SZ17}).
			\end{itemize}
		\end{enumerate}
	\end{remark}
	\begin{lemma}\label{Lemma:torsionfreepicard}
		Let $X$ be a smooth, projective, geometrically integral variety over a number field $k$. 
		For any field extension $K/k$, the Picard group $\Pic(X_{\overline{k}})$ is torsion-free if and only if $\Pic(X_{\overline{K}})\) is torsion-free.
	\end{lemma}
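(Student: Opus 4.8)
The plan is to reduce the statement to the invariance of torsion \'etale cohomology under extension of algebraically closed fields, using the characterization already recorded in Remark~\ref{Remark:torsionfree}. First I would fix compatible algebraic closures. Since $k\subseteq K\subseteq\overline{K}$, the algebraic closure of $k$ inside $\overline{K}$ is itself an algebraic closure of $k$; as any two algebraic closures of $k$ are $k$-isomorphic, after such an isomorphism I may assume that $\overline{k}\subseteq\overline{K}$ is a tower of algebraically closed fields and that $X_{\overline{K}}=(X_{\overline{k}})\times_{\overline{k}}\overline{K}$. By Remark~\ref{Remark:torsionfree}(a), $\Pic(X_{\overline{k}})$ is torsion-free if and only if $\homology^{1}(X_{\overline{k}},\bQ/\bZ)=0$ (\'etale cohomology), and likewise for $\overline{K}$. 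Hence it suffices to prove the isomorphism $\homology^{1}(X_{\overline{k}},\bQ/\bZ)\cong\homology^{1}(X_{\overline{K}},\bQ/\bZ)$. It is worth recalling here that the Kummer sequence gives $\homology^{1}(X_{\overline{k}},\mu_{n})\cong\Pic(X_{\overline{k}})[n]$, since $\overline{k}^{\,*}$ is divisible, so that this $\homology^{1}$ literally computes the torsion of the Picard group.

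The key step is a base-change argument. Because $X$ is smooth and projective, hence proper over $k$, the proper base change theorem guarantees that for every integer $n>0$ the natural map
\[
\homology^{1}(X_{\overline{k}},\bZ/n\bZ)\longrightarrow\homology^{1}(X_{\overline{K}},\bZ/n\bZ)
\]
is an isomorphism; that is, torsion \'etale cohomology of a proper variety is insensitive to extension of the (separably, a fortiori algebraically) closed base field. Passing to the colimit over $n$, together with $\bQ/\bZ=\varinjlim_{n}\bZ/n\bZ$, yields $\homology^{1}(X_{\overline{k}},\bQ/\bZ)\cong\homology^{1}(X_{\overline{K}},\bQ/\bZ)$, whence one side vanishes exactly when the other does. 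This proves the lemma.

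The only nontrivial input is this base-change invariance; everything else is formal, and in particular the argument works for an arbitrary (possibly non-algebraic or non-finitely-generated) extension $K/k$, since proper base change imposes no such restriction. I expect this to be the sole point requiring care, mainly in citing the correct form of the theorem. As an alternative route avoiding \'etale cohomology, one could instead use the second characterization in Remark~\ref{Remark:torsionfree}(a): flat base change for coherent cohomology gives $\homology^{1}(X_{\overline{K}},\mathcal{O}_{X_{\overline{K}}})\cong\homology^{1}(X_{\overline{k}},\mathcal{O}_{X_{\overline{k}}})\otimes_{\overline{k}}\overline{K}$, so the condition $\homology^{1}(X,\mathcal{O}_{X})=0$ is preserved, while the invariance of the N\'eron--Severi group (in particular of its torsion subgroup) under extension of algebraically closed fields handles the condition that $\NS$ be torsion-free. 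Both approaches ultimately rest on the same standard kind of base-change result.
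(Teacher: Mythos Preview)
Your argument is correct. Your main route differs from the paper's: you invoke the first equivalence in Remark~\ref{Remark:torsionfree}(a), namely $\Pic(X_{\overline{k}})$ torsion-free $\Leftrightarrow \homology^{1}(X_{\overline{k}},\bQ/\bZ)=0$, and then appeal to proper base change for torsion \'etale cohomology to identify $\homology^{1}(X_{\overline{k}},\bZ/n\bZ)\cong\homology^{1}(X_{\overline{K}},\bZ/n\bZ)$ for all $n$. The paper instead uses the second equivalence ($\homology^{1}(X,\mathcal{O}_X)=0$ and $\NS(X_{\overline{k}})$ torsion-free): flat base change for coherent cohomology transports the vanishing of $\homology^{1}(\mathcal{O})$, and the identification $\NS(X_{\overline{k}})\cong\NS(X_{\overline{K}})$ is obtained via the group of connected components of the Picard scheme (citing \cite{KL05}). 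Your alternative sketch at the end is essentially the paper's proof. Your \'etale approach is cleaner in that a single base-change theorem does all the work; the paper's approach is slightly more structural, making explicit that $\Pic$ coincides with $\NS$ on both sides and that this group is literally unchanged. Either way the content is the same standard invariance principle, as you note.
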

	
	\begin{proof}
		By flat base change, we have an isomorphism
		\[
		\homology^{1}(X, \mathcal{O}_{X}) \otimes_{k} K \simeq \homology^{1}(X_{K}, \mathcal{O}_{X_{K}}).
		\]
		If $\Pic(X_{\overline{k}})$ is torsion-free, then by Remark \ref{Remark:torsionfree}(a) we have $\textup{H}^{1}(X, \mathcal{O}_{X}) = 0$, and consequently $\homology^{1}(X_{K}, \mathcal{O}_{X_{K}}) = 0$. 
		Applying Remark \ref{Remark:torsionfree}(a) again, we obtain
		\[
		\Pic(X_{\overline{k}}) \simeq \NS(X_{\overline{k}})
		\quad \text{and} \quad
		\Pic(X_{\overline{K}}) \simeq \NS(X_{\overline{K}}),
		\]
		where $\overline{K}$ is a fixed algebraic closure of $K$.
		
		For any algebraically closed field $L$ containing $k$, the Néron--Severi group $\NS(X_{L})$ is isomorphic to the group of connected components of the Picard scheme of $X$ (see \cite[Corollary 4.18.3, Proposition 5.3, Proposition 5.10]{KL05}). 
		In particular, $\NS(X_{\overline{k}})$ and $\NS(X_{\overline{K}})$ are isomorphic. 
		Therefore, $\Pic(X_{\overline{k}})$ is torsion-free if and only if $\Pic(X_{\overline{K}})$ is torsion-free.
	\end{proof}
	
	\begin{lemma}\label{Lemma:Brauerregular}
		Let $X$ be a smooth variety over an algebraically closed field $k$ and $K/k$ be an  extension of fields
		\begin{enumerate}[(a)]
			\item  The restriction homomorphism $\Br(X)\rightarrow \Br(X_{K})$ is injective.
			\item  If $K$ is moreover algebraically closed, then the homomorphism above is an isomorphism
		\end{enumerate}
	\end{lemma}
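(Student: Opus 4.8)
The plan is to prove the two parts by genuinely different mechanisms, using throughout that for a smooth (hence regular) integral variety $Y$ the restriction $\Br(Y)\hookrightarrow\Br(k(Y))$ to the function field is injective, so that every Brauer group appearing is torsion. All cohomology below is \'etale, and $X$ is understood to be integral (as a variety over the algebraically closed $k$, it is then geometrically integral, so all base changes remain integral).

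For part (a) I would first reduce to the case where $K/k$ is finitely generated. Writing $K$ as the filtered union of its finitely generated subextensions $K_\alpha$, one has $X_K=\varprojlim_\alpha X_{K_\alpha}$ with affine transition maps, so by the continuity (limit) theorem for \'etale cohomology $\Br(X_K)=\varinjlim_\alpha\Br(X_{K_\alpha})$; hence a class killed in $\Br(X_K)$ is already killed in some $\Br(X_{K_\alpha})$, and it suffices to treat finitely generated $K$. For such $K$ I would choose a smooth integral affine $k$-variety $V$ with $k(V)=K$, so that $X_K$ is the generic fibre of the projection $\pr_V\colon X\times_k V\to V$ and $k(X_K)=k(X\times_k V)$. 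The restriction $j^*\colon\Br(X\times_k V)\to\Br(X_K)$ is then injective, since both source and target inject into this common function field. Now the hypothesis $k=\overline{k}$ is used to pick $v\in V(k)$; the fibre inclusion $i_v\colon X\cong X\times\{v\}\hookrightarrow X\times_k V$ satisfies $\pr_X\circ i_v=\id_X$, whence $i_v^*\circ\pr_X^*=\id$ on $\Br(X)$. Since $\alpha_K=j^*(\pr_X^*\alpha)$, if $\alpha_K=0$ then $\pr_X^*\alpha=0$ by injectivity of $j^*$, and applying $i_v^*$ gives $\alpha=0$. This yields the injectivity in (a).

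For part (b), with $K$ also algebraically closed, I would argue $n$-torsion by $n$-torsion via the Kummer sequence. For each $n$ it gives $\Pic(X)/n\hookrightarrow H^2(X,\mu_n)\twoheadrightarrow\Br(X)[n]$ and the analogous sequence for $X_K$, compatibly with base change. The key input is that the base-change map $H^2(X,\mu_n)\to H^2(X_K,\mu_n)$ is an isomorphism; this is the invariance of torsion \'etale cohomology under an extension of separably closed base fields. A short diagram chase then shows $\Br(X)[n]\to\Br(X_K)[n]$ is surjective: the composite $H^2(X,\mu_n)\to H^2(X_K,\mu_n)\twoheadrightarrow\Br(X_K)[n]$ is onto and factors through $\Br(X)[n]$. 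Since both Brauer groups are torsion, taking the union over all $n$ gives surjectivity of $\Br(X)\to\Br(X_K)$, and combined with the injectivity from part (a) this produces the desired isomorphism. Note the chase needs only surjectivity of the middle vertical map, so $\Pic$ never has to be computed.

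The main obstacle I anticipate is keeping the hypotheses aligned so that the two standard black boxes apply cleanly: in (a), verifying that $X\times_k V$ and $X_K$ genuinely share one function field (this is exactly where smoothness/regularity, integrality, and the existence of a $k$-point on $V$ enter) so that $j^*$ is injective; and in (b), invoking the invariance of $H^2(-,\mu_n)$ under extension of algebraically closed fields in the right generality for a smooth but possibly non-proper $X$. Neither step is a computation, but both must be set up carefully so that the functorial squares commute as claimed.
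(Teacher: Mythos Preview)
Your argument is correct. The paper does not give its own proof of this lemma; it simply cites \cite[Proposition~5.2.2]{CTS21} for part~(a) and \cite[Proposition~5.2.3]{CTS21} for part~(b). Your write-up essentially unpacks those references: the reduction to a finitely generated extension, the choice of a smooth model $V$ with a $k$-point giving a retraction of $\pr_X^*$, and the injectivity of restriction to the generic fibre via the common function field is precisely the mechanism behind Proposition~5.2.2, while the Kummer-sequence argument combined with invariance of $H^2(-,\mu_n)$ under extension of algebraically closed fields is the content of Proposition~5.2.3. So there is no genuine difference in route, only in level of detail; what you gain is a self-contained proof, whereas the paper opts for a one-line citation.
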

	\begin{proof}
		See \cite[Proposition 5.2.2]{CTS21} and \cite[Proposition 5.2.3]{CTS21}.
	\end{proof}
	
	\begin{lemma}\label{Lemma:Brauerregularextension}
		Let $X$ be a smooth projective geometrically integral variety over a field $k$ such that $\Pic(X_{\overline{k}})$ is a torsion-free abelian group, let $K/k$ be a regular field extension.
		\begin{enumerate}[(a)]
			\item The induced homomorphism
			\[
			\homology^{i}\bigl(k,\Pic(X_{\overline{k}})\bigr) \longrightarrow \homology^{i}\bigl(K,\Pic(X_{\overline{K}})\bigr)
			\]
			is bijective for $i=1$ and injective for $i=2$. Consequently, the homomorphism
			\[
			\Br_{1}(X)/\Br(k) \longrightarrow \Br_{1}(X_{K})/\Br(K)
			\]
			is injective.
			\item The isomorphism $\Br(X_{\overline{k}}) \simeq \Br(X_{\overline{K}})$ induces an isomorphism
			\[
			\Br(X_{\overline{k}})^{\Gamma_{k}} \simeq \Br(X_{\overline{K}})^{\Gamma_{K}}.
			\]
			Moreover, the homomorphism
			\[
			\Br(X)/\Br(k) \longrightarrow \Br(X_{K})/\Br(K)
			\]
			is injective.
			\item If the homomorphism $\epsilon_{X}: \Br_{1}(X)/\Br(k) \to \homology^{1}\bigl(k,\Pic(X_{\overline{k}})\bigr)$ in \eqref{epsilon} is an isomorphism, then the homomorphism in (b) is also an isomorphism.
		\end{enumerate}
	\end{lemma}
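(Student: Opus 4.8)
The plan is to exploit the short exact sequence $0\to \Gamma_{K.\overline{k}}\to \Gamma_K\to \Gamma_k\to 0$ of Lemma~\ref{Lemma:fieldextension} and to show that both $\Pic(X_{\overline{K}})$ and $\Br(X_{\overline{K}})$ are, as $\Gamma_K$-modules, inflated from $\Gamma_k$-modules. The key observation is that the normal subgroup $\Gamma_{K.\overline{k}}=\Gal(\overline{K}/K.\overline{k})$ acts trivially on each of these groups. Indeed, by the previous lemma the base-change map $\Pic(X_{\overline{k}})\to\Pic(X_{\overline{K}})$ is an isomorphism (both groups equal their torsion-free Néron--Severi groups), and $\Br(X_{\overline{k}})\to\Br(X_{\overline{K}})$ is an isomorphism by the cited result on Brauer groups over algebraically closed fields. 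Since every class therefore descends to $\overline{k}$, it is pulled back along the projection $X_{\overline{K}}\to X_{\overline{k}}$; as any $\gamma\in\Gal(\overline{K}/\overline{k})$ fixes $\overline{k}$ and hence commutes with this projection, it fixes every such class. In particular $\Gamma_{K.\overline{k}}\subset\Gal(\overline{K}/\overline{k})$ acts trivially, the isomorphisms above are $\Gamma_k$-equivariant for the induced action, and the base-change maps in Galois cohomology become the inflation maps attached to $\Gamma_K\to\Gamma_k$.

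For (a), I would feed the trivial $\Gamma_{K.\overline{k}}$-module $M:=\Pic(X_{\overline{k}})$ into the five-term inflation--restriction sequence
\[0\to \homology^{1}(\Gamma_k,M)\to \homology^{1}(\Gamma_K,M)\to \homology^{1}(\Gamma_{K.\overline{k}},M)^{\Gamma_k}\to \homology^{2}(\Gamma_k,M)\to \homology^{2}(\Gamma_K,M).\]
Because $M$ is finitely generated and torsion-free, $M\cong\ZZ^{r}$, and every continuous homomorphism from the profinite group $\Gamma_{K.\overline{k}}$ to the discrete torsion-free group $\ZZ^{r}$ is trivial; hence $\homology^{1}(\Gamma_{K.\overline{k}},M)=0$. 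The sequence then yields an isomorphism on $\homology^{1}$ and injectivity on $\homology^{2}$, which is the first assertion. The statement for $\Br_1$ follows from the commutative square relating $\epsilon_X$ and $\epsilon_{X_K}$: the right vertical map $\homology^{1}(k,\Pic(X_{\overline{k}}))\to\homology^{1}(K,\Pic(X_{\overline{K}}))$ is the isomorphism just obtained, the horizontal maps $\epsilon_X,\epsilon_{X_K}$ are injective, so the left vertical map $\Br_1(X)/\Br(k)\to\Br_1(X_K)/\Br(K)$ is injective.

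For (b), the first isomorphism is immediate from the triviality of the $\Gamma_{K.\overline{k}}$-action: $\Br(X_{\overline{K}})^{\Gamma_K}=\Br(X_{\overline{K}})^{\Gamma_k}\cong\Br(X_{\overline{k}})^{\Gamma_k}$. For the injectivity of $\Br(X)/\Br(k)\to\Br(X_K)/\Br(K)$, I would base change the fundamental exact sequence \eqref{sequence2.2} along $k\to K$ to obtain a commutative ladder with vertical maps $a\colon\Br_1(X)/\Br(k)\to\Br_1(X_K)/\Br(K)$, $b\colon\Br(X)/\Br(k)\to\Br(X_K)/\Br(K)$, $c\colon\Br(X_{\overline{k}})^{\Gamma_k}\to\Br(X_{\overline{K}})^{\Gamma_K}$, and $d\colon\homology^{2}(k,\Pic(X_{\overline{k}}))\to\homology^{2}(K,\Pic(X_{\overline{K}}))$. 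Here $a$ is injective by (a), $c$ is the isomorphism just established, and $d$ is injective by (a); a short diagram chase then forces $b$ to be injective.

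For (c), assume $\epsilon_X$ is an isomorphism. Comparing it with the isomorphism on $\homology^{1}$ from (a) through the $\epsilon$-square shows first that $\epsilon_{X_K}$ is an isomorphism (it is always injective and now becomes surjective) and hence that $a$ is an isomorphism. Returning to the ladder of (b), I then have $a$ and $c$ isomorphisms and $d$ injective, and the remaining diagram chase produces, for any class in $\Br(X_K)/\Br(K)$, a preimage under $b$; this gives surjectivity of $b$ and, together with (b), the desired isomorphism. I expect the main obstacle to be the careful verification that the various base-change squares (for $\epsilon_X$ and for the sequence \eqref{sequence2.2}) genuinely commute and that the Galois actions match under the identifications $\Pic(X_{\overline{K}})\cong\Pic(X_{\overline{k}})$ and $\Br(X_{\overline{K}})\cong\Br(X_{\overline{k}})$; once this functoriality is in place, the cohomological input (the vanishing $\homology^{1}(\Gamma_{K.\overline{k}},M)=0$) and the diagram chases are routine.
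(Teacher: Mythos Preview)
Your proposal is correct and follows essentially the same route as the paper: the inflation--restriction five-term sequence together with the vanishing $\homology^{1}(\Gamma_{K.\overline{k}},M)=0$ for the torsion-free finitely generated module $M=\Pic(X_{\overline{k}})$ gives (a), and the commutative ladder built from \eqref{sequence2.2} with the maps $a,c,d$ you describe gives (b) and (c) by the same diagram chase (the paper phrases the last step as the five lemma). Your added explanation of why $\Gamma_{K.\overline{k}}$ acts trivially on $\Pic(X_{\overline{K}})$ and $\Br(X_{\overline{K}})$ is a welcome clarification that the paper leaves implicit.
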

	
	\begin{proof}
		\begin{enumerate}[(a)]
			    \item We know that the Galois group $\Gamma_{K}$ acts on $\Pic(X_{\overline{K}})\simeq \Pic(X_{\overline{k}}) $ factoring through its quotient $\Gamma_{k}$ according to Lemma \ref{Lemma:fieldextension}.  Consider the inflation-restriction exact sequence for profinite groups \cite[Proposition 1.6.6]{NSW08}
			\begin{equation*}
			\begin{aligned}
				0 \to \homology^{1}\bigl(k,\Pic(X_{\overline{k}})^{\Gamma_{K.\overline{k}}}\bigr) \to \homology^{1}\bigl(K,\Pic(X_{\overline{K}})\bigr) &\to \homology^{1}\bigl(\Gamma_{K.\overline{k}},\Pic(X_{\overline{K}})\bigr)^{\Gamma_{k}} \\
				& \to \homology^{2}\bigl(k,\Pic(X_{\overline{k}})^{\Gamma_{K.\overline{k}}}\bigr) \to \homology^{2}\bigl(K,\Pic(X_{\overline{K}})\bigr).
			\end{aligned}			
		\end{equation*}
			Since $\Gamma_{K.\overline{k}}$ acts trivially on the finitely generated and torsion-free module $\Pic(X_{\overline{K }})$, we have
			\[
			\homology^{1}\bigl(\Gamma_{K.\overline{k}},\Pic(X_{\overline{K}})\bigr)^{\Gamma_{k}} = 0.
			\]
			Hence we get the desired isomorphism for $i=1$ and the desired  monomorphism for $i=2$. The claim about $\Br_{1}(X)/\Br(k)$ follows from the fact that it is a subgroup of $\homology^{1}\bigl(k,\Pic(X_{\overline{k}})\bigr)$ (and similarly over $K$).
			
			 \item By Lemma~\ref{Lemma:Brauerregular}, we know that $\Br(X_{\overline{k}})\simeq \Br(X_{\overline{K}})$ immediately, the isomorphism $\Br(X_{\overline{k}})^{\Gamma_{k}} \simeq \Br(X_{\overline{K}})^{\Gamma_{K}}$ then follows from the fact that $\Gamma_{K.\overline{k}}$ acts trivially on $X_{\overline{k}}$. Consider the commutative diagram with exact rows
			\begin{equation*}
				\hspace{-0.3in}
				\begin{tikzcd}
					0 \arrow[r] & \Br_{1}(X)/\Br(k) \arrow[r] \arrow[d, hook] & \Br(X)/\Br(k) \arrow[r] \arrow[d] & \Br(X_{\overline{k}})^{\Gamma_{k}} \arrow[r] \arrow[d, "\simeq"] & \homology^{2}\bigl(k,\Pic(X_{\overline{k}})\bigr) \arrow[d, hook] \\
					0 \arrow[r] & \Br_{1}(X_{K})/\Br(K) \arrow[r] & \Br(X_{K})/\Br(K) \arrow[r] & \Br(X_{\overline{K}})^{\Gamma_{K}} \arrow[r] & \homology^{2}\bigl(K,\Pic(X_{\overline{K}})\bigr)
				\end{tikzcd}
			\end{equation*}
			The left and right vertical homomorphism is injective by (a), then the middle vertical homomorphism is injective by diagram chasing.
			
			 \item If $\epsilon_{X}$ is an isomorphism, then by (a) the homomorphism $\epsilon_{X_{K}}$ is also an isomorphism. Thus
			\[
			\Br_{1}(X)/\Br(k) \simeq \homology^{1}\bigl(k,\Pic(X_{\overline{k}})\bigr) \simeq \homology^{1}\bigl(K,\Pic(X_{\overline{K}})\bigr) \simeq \Br_{1}(X_{K})/\Br(K).
			\]
			The five-lemma applied to the diagram in (b) then shows that the middle vertical homomorphism is an isomorphism.
		\end{enumerate}
	\end{proof}
	
	\begin{proposition}\label{Proposition:productofcohomology}
		Let $X$ and $Y$ be smooth, projective, geometrically integral varieties over a field $k$ such that $\Pic(X_{\overline{k}})$ is torsion-free. Set $Z = X \times_k Y$, and let $\pr_X$ and $\pr_Y$ be the projections.
		\begin{enumerate}[(a)]
			\item The canonical homomorphism
			\[
			\pr_Y^* : \homology^1(Y, \mathcal{O}_Y) \longrightarrow \homology^1(Z, \mathcal{O}_Z)
			\]
			is an isomorphism.
			
			\item The following canonical homomorphisms induced by the projections are isomorphisms of $\Gamma_k$-modules:
			\[
			\begin{aligned}
				\pr_X^* + \pr_Y^* &: \NS(X_{\overline{k}}) \oplus \NS(Y_{\overline{k}}) \xrightarrow{\simeq} \NS(Z_{\overline{k}}), \\
				\pr_X^* + \pr_Y^* &: \Pic(X_{\overline{k}}) \oplus \Pic(Y_{\overline{k}}) \xrightarrow{\simeq} \Pic(Z_{\overline{k}}), \\
				\pr_X^* + \pr_Y^* &: \Br(X_{\overline{k}}) \oplus \Br(Y_{\overline{k}}) \xrightarrow{\simeq} \Br(Z_{\overline{k}}).
			\end{aligned}
			\]
			
			\item The canonical homomorphism
			\[
			\bigl(\Br(X)/\Br(k)\bigr) \oplus \bigl(\Br(Y)/\Br(k)\bigr) \xrightarrow{\simeq} \Br(Z)/\Br(k)
			\]
			is an isomorphism if both $\epsilon_X$ and $\epsilon_Y$ defined in \eqref{epsilon} are isomorphisms. This holds in particular when $k$ is a number field.
		\end{enumerate}
		
	\end{proposition}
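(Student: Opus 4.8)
The plan is to prove the three parts in order, deducing each from a Künneth-type decomposition together with the single structural input, recorded in Remark~\ref{Remark:torsionfree}(a), that torsion-freeness of $\Pic(X_{\overline{k}})$ is equivalent to $H^1(X,\mathcal{O}_X)=0$, $\textbf{Pic}^0_{X/k}=0$, and $H^1(X_{\overline{k}},\mathbb{Q}/\mathbb{Z})=0$. For (a) I would invoke the Künneth formula for coherent cohomology over $k$, which gives $H^1(Z,\mathcal{O}_Z)\cong\bigl(H^0(X,\mathcal{O}_X)\otimes_k H^1(Y,\mathcal{O}_Y)\bigr)\oplus\bigl(H^1(X,\mathcal{O}_X)\otimes_k H^0(Y,\mathcal{O}_Y)\bigr)$. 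Since $X$ is projective and geometrically integral, $H^0(X,\mathcal{O}_X)=k$, and by hypothesis $H^1(X,\mathcal{O}_X)=0$; thus the first summand is exactly $\pr_Y^*H^1(Y,\mathcal{O}_Y)$ and the second vanishes, which is the assertion.

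For (b) I would first treat the Picard groups over $\overline{k}$ using the standard decomposition of the Picard group of a product coming from the seesaw principle, $\Pic(Z_{\overline{k}})=\pr_X^*\Pic(X_{\overline{k}})\oplus \pr_Y^*\Pic(Y_{\overline{k}})\oplus \mathrm{Corr}$, where the group of divisorial correspondences satisfies $\mathrm{Corr}\cong\Hom(\mathrm{Alb}(X_{\overline{k}}),\textbf{Pic}^0_{Y})$. Since $\textbf{Pic}^0_{X/k}=0$, equivalently $\mathrm{Alb}(X_{\overline{k}})=0$, the correspondence term vanishes, and as all maps are defined over $k$ the resulting isomorphism is $\Gamma_k$-equivariant. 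The Néron--Severi isomorphism then follows by quotienting by $\textbf{Pic}^0$, using $\textbf{Pic}^0(Z_{\overline{k}})=\textbf{Pic}^0(Y_{\overline{k}})$ and $\Pic(X_{\overline{k}})=\NS(X_{\overline{k}})$. The Brauer isomorphism is the delicate step: fixing a prime $\ell$, I would use the Kummer sequence to present the $\ell$-primary part of $\Br(Z_{\overline{k}})$ as the cokernel of $\NS(Z_{\overline{k}})\otimes\mathbb{Q}_\ell/\mathbb{Z}_\ell\to H^2(Z_{\overline{k}},\mathbb{Z}_\ell(1))\otimes\mathbb{Q}_\ell/\mathbb{Z}_\ell$ together with $H^3(Z_{\overline{k}},\mathbb{Z}_\ell(1))_{\mathrm{tors}}$, and compute each $\ell$-adic group by Künneth. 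The hypothesis $H^1(X_{\overline{k}},\mathbb{Q}/\mathbb{Z})=0$ yields $H^1(X_{\overline{k}},\mathbb{Z}_\ell)=0$ and, via the long exact sequence of $0\to\mathbb{Z}_\ell\to\mathbb{Q}_\ell\to\mathbb{Q}_\ell/\mathbb{Z}_\ell\to 0$, also $H^2(X_{\overline{k}},\mathbb{Z}_\ell)_{\mathrm{tors}}=0$; since $H^1(Y_{\overline{k}},\mathbb{Z}_\ell)$ is torsion-free, these vanishings annihilate every mixed tensor and $\mathrm{Tor}$ contribution in the Künneth expansions of $H^2$ and $H^3$, so both groups split as the sum of the corresponding groups for $X$ and $Y$. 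Matching this with the $\NS$-decomposition through the cycle class map gives $\Br(Z_{\overline{k}})=\pr_X^*\Br(X_{\overline{k}})\oplus\pr_Y^*\Br(Y_{\overline{k}})$, $\Gamma_k$-equivariantly; summing over $\ell$ completes (b).

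For (c) I would compare the exact sequence~\eqref{sequence2.2} for $Z$ with the direct sum of the same sequences for $X$ and $Y$, the vertical comparison maps being induced by $\pr_X^*+\pr_Y^*$. By (b) the terms $\Br(\cdot_{\overline{k}})^{\Gamma_k}$ and $H^2(k,\Pic(\cdot_{\overline{k}}))$ are additive, so the third and fourth vertical arrows are isomorphisms. To handle the first arrow I would use the injections $\epsilon_X,\epsilon_Y,\epsilon_Z$ into $H^1(k,\Pic(\cdot_{\overline{k}}))$: since $\epsilon_X,\epsilon_Y$ are isomorphisms and $H^1(k,\Pic(Z_{\overline{k}}))=H^1(k,\Pic(X_{\overline{k}}))\oplus H^1(k,\Pic(Y_{\overline{k}}))$ by (b), a short diagram chase forces $\epsilon_Z$ to be an isomorphism as well, whence $\Br_1(X)/\Br(k)\oplus\Br_1(Y)/\Br(k)\xrightarrow{\simeq}\Br_1(Z)/\Br(k)$. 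A short-five-lemma argument applied to the morphism of four-term exact sequences then shows that the middle arrow $\bigl(\Br(X)/\Br(k)\bigr)\oplus\bigl(\Br(Y)/\Br(k)\bigr)\to\Br(Z)/\Br(k)$ is an isomorphism. When $k$ is a number field, $\epsilon_W$ is known to be an isomorphism for every $W$, so the hypothesis is automatically satisfied.

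The main obstacle is the Brauer part of (b). The Künneth bookkeeping itself is routine once the two vanishing statements are in hand, so the real work lies in deriving $H^1(X_{\overline{k}},\mathbb{Z}_\ell)=0$ and especially $H^2(X_{\overline{k}},\mathbb{Z}_\ell)_{\mathrm{tors}}=0$ from torsion-freeness of $\Pic(X_{\overline{k}})$, and in checking that the splitting is compatible with the cycle class map so that the \emph{transcendental quotients}, not merely the ambient cohomology groups, decompose. One must also ensure that $\Br(Z_{\overline{k}})$ is the full torsion of $H^2(Z_{\overline{k}},\mathbb{G}_m)$, which is legitimate since $Z_{\overline{k}}$ is smooth, so that the prime-by-prime analysis genuinely recovers the whole Brauer group.
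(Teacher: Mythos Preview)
Your proposal is correct and follows the same overall architecture as the paper's proof: part~(a) via coherent K\"unneth and $H^1(X,\mathcal{O}_X)=0$, part~(c) by comparing the exact sequence~\eqref{sequence2.2} for $Z$ with that for $X$ and $Y$, reducing to the $\Br_1$ statement via the hypothesis on $\epsilon_X,\epsilon_Y$ and the $H^1(k,\Pic)$-additivity coming from~(b). The only difference is in~(b): the paper dispatches both the Picard and Brauer decompositions by direct citation to \cite[Proposition~5.7.3, Corollary~5.7.10]{CTS21}, whereas you unpack the content of those references---the seesaw/correspondence argument for $\Pic$ and the $\ell$-adic K\"unneth computation for $\Br$. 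Your derivation of $H^1(X_{\overline{k}},\mathbb{Z}_\ell)=0$ and $H^2(X_{\overline{k}},\mathbb{Z}_\ell)_{\mathrm{tors}}=0$ from $H^1(X_{\overline{k}},\mathbb{Q}/\mathbb{Z})=0$ is exactly what underlies \cite[Corollary~5.7.10]{CTS21}, so the two arguments coincide in substance; yours is simply more self-contained. One small imprecision: for $H^3(Z_{\overline{k}},\mathbb{Z}_\ell)$ the mixed term $H^2(X_{\overline{k}},\mathbb{Z}_\ell)\otimes H^1(Y_{\overline{k}},\mathbb{Z}_\ell)$ does not vanish, but it is torsion-free and hence does not contribute to $H^3_{\mathrm{tors}}$, which is all you need.
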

	\begin{proof}
		As in Remark \ref{Remark:Picardtorsionfree}(a), the condition that $\Pic(X_{\overline{k}})$ is torsion-free implies $\homology^1(X, \mathcal{O}_X) = 0$ and the Picard variety of $X$ is trivial. Thus assertion (a) follows directly from the Künneth formula \cite[\href{https://stacks.math.columbia.edu/tag/0BEC}{Tag 0BEC}]{stacks-project}. Assertion (b) is a consequence of \cite[Proposition 5.7.3]{CTS21} and \cite[Corollary 5.7.10]{CTS21}.
		
		It remains to prove (c). From (b) we obtain isomorphisms
		\[
		\begin{aligned}
			\pr_X^* + \pr_Y^* &: \Br(X_{\overline{k}})^{\Gamma_k} \oplus \Br(Y_{\overline{k}})^{\Gamma_k} \xrightarrow{\simeq} \Br(Z_{\overline{k}})^{\Gamma_k}, \\
			\pr_X^* + \pr_Y^* &: \homology^i\bigl(k,\Pic(X_{\overline{k}})\bigr) \oplus \homology^i\bigl(k,\Pic(Y_{\overline{k}})\bigr) \xrightarrow{\simeq} \homology^i\bigl(k,\Pic(Z_{\overline{k}})\bigr) \quad (i \ge 1).
		\end{aligned}
		\]
		From the exact sequence \eqref{sequence2.2}, the conclusion holds if and only if
		\[
		\pr_X^* + \pr_Y^* : \bigl(\Br_1(X)/\Br(k)\bigr) \oplus \bigl(\Br_1(Y)/\Br(k)\bigr) \longrightarrow \Br_1(Z)/\Br(k)
		\]
		is an isomorphism. Under the assumption that $\epsilon_X$ and $\epsilon_Y$ are isomorphisms, with the help of the isomorphism on $\homology^1\bigl(k,\Pic(-_{\overline{k}})\bigr)$ induced by (b), one can then check that $\epsilon_{Z}$ is also an isomorphism. Hence the claim follows.
	\end{proof}
	
	Indeed, it follows directly from this proposition that if \(\Pic(X_{\overline{k}})\) and \(\Pic(Y_{\overline{k}})\) are both torsion-free abelian groups, where \(X\) and \(Y\) are smooth, projective, geometrically integral varieties over \(k\), then the product \(Z := X \times_k Y\) also has a torsion-free geometric Picard group \(\Pic(Z_{\overline{k}})\).

	\subsection{Brauer groups of Weil restrictions}

Let $l/k$ be a field extension of degree $d$. We fix an algebraic closure $\overline{k}$ of $k$ containing $l$. Let $X$ be a quasi-projective variety over $l$.  According to \cite[Theorem 4, Section 7.6]{BLR90}, the Weil restriction of scalars $X' \coloneqq \Res_{l/k}(X)$ is a $k$-variety. In this section, we show that under the additional assumption that $\Pic(X_{\overline{k}})$ is torsion-free and that $X$ is projective, the Brauer groups of $X$ and $X'$ are isomorphic modulo constant parts.

We reload the basic setting in \cite[Section 4]{CL22}, while we consider the sheaf $\mathbb{G}_m$  instead of the torsion sheaf $\mu_n$ concerned in \cite{CL22}. The arguments remain the same.

Let $\alpha: X' \times_k l \rightarrow X$ be the natural $l$-morphism induced by the identity morphism of $X'$ over $k$ and the universal property of the Weil restriction. This yields the following composition of homomorphisms
	\begin{equation}\label{Definition:phi}
		\phi : \homology^{p}(X,\Gm)\xrightarrow{\alpha^{*}} \homology^{p}(X'_{l},\Gm)\xrightarrow{\mathrm{cor}_{l/k}} \homology^{p}(X',\Gm),
	\end{equation}
	where $X'_l = X' \times_k l$.
	
	We consider the Leray spectral sequence for the étale cohomology of the multiplicative group
	\begin{equation*}
		E^{p,q}_{2} = \homology^{p}\big(l, H^{q}(X_{\overline{k}},\Gm)\big) \Rightarrow \homology^{p+q}(X,\Gm).
	\end{equation*}

	Recall that we have $d$ distinct $k$-embeddings $\sigma_s: l \hookrightarrow \overline{k}$, each of which extends naturally to a k-isomorphism $\sigma_s: \overline{k} \to \overline{k}$. Let $\Gamma_k / \Gamma_l = \{\sigma_s \Gamma_l \mid 1 \leq s \leq d\}$.  For simplicity, we denote the coset by $\gamma_s$. For each $\sigma_s$, let $X_{\overline{k}, \gamma_s}$ be the base change of $X$ from $l$ to $\overline{k}$ via the embedding $\sigma_s$. Since this construction depends only on the coset $\gamma_s$, we also write $X_{\overline{k}, s}$ for short. The group $\Gamma_k$ acts naturally on the set $\{\gamma_s\}$. Because Weil restriction is compatible with base change, we have
	\[
	X'_{\overline{k}} = \prod_{s=1}^{d} X_{\overline{k}, s}.
	\]
	
	For each $\gamma \in \Gamma_k$, there is an action of $\gamma$ on $X'_{\overline{k}}$ that permutes the factors of $\prod_{s=1}^{d} X_{\overline{k}, s}$. There is a natural left action of $\Gamma_l$ on $\homology^{q}(X_{\overline{k}},\mathbb{G}_m)$. Furthermore, the construction above yields a left action of $\Gamma_k$ on $\bigoplus_{s=1}^{d} \homology^{q}(X_{\overline{k}, s},\mathbb{G}_m)$by permuting the summands. This implies that, as a Galois module, $\bigoplus_{s=1}^{d} \homology^{q}(X_{\overline{k}, s},\mathbb{G}_m)$ is precisely the induced representation $\operatorname{Ind}_{\Gamma_l}^{\Gamma_k} \homology^{q}(X_{\overline{k}},\mathbb{G}_m)$.
	
	Let $p_s \colon X'_{\overline{k}} \to X_{\overline{k}, s}$ denote the projection onto the $s$-th factor. These induce a homomorphism of $\Gamma_k$-modules  
	
	\[
	\bigoplus_{s=1}^{d} \homology^{q}(X_{\overline{k}, s},\mathbb{G}_m) \xrightarrow{\,p_1^* + \dots + p_d^*\,} \homology^{q}\!\Bigl(\prod_{s=1}^{d} X_{\overline{k}, s},\,\mathbb{G}_m\Bigr) = \homology^{q}(X'_{\overline{k}},\mathbb{G}_m).
	\]
	
	Combined with Shapiro’s lemma, this homomorphism yields the corresponding terms in the spectral sequence comparison. Consequently, we obtain a homomorphism
	
	\[
	\phi_{p,q} \colon \homology^{p}\bigl(l,\,\homology^{q}(X_{\overline{k}},\mathbb{G}_m)\bigr) \longrightarrow \homology^{p}\bigl(k,\,\homology^{q}(X'_{\overline{k}},\mathbb{G}_m)\bigr),
	\]
	which is compatible with the previously defined homomorphism $\phi$ in \eqref{Definition:phi}.

	\begin{proposition}\label{Proposition:WeirestricionBrauer}
		Let $X$ be a smooth, projective, and geometrically integral variety over $l$ such that $\Pic(X_{\overline{l}})$ is a torsion-free abelian group. Then:
		\begin{enumerate}[(a)]
			\item The group $\Pic(X'_{\overline{k}})$ is torsion-free.
			\item The canonical homomorphisms
			\[
			\phi_{p,1}: \homology^{p}\!\bigl(l,\Pic(X_{\overline{l}})\bigr) \longrightarrow \homology^{p}\!\bigl(k,\Pic(X'_{\overline{k}})\bigr)
			\]
			and
			\[
			\phi_{p,2}: \homology^{p}\!\bigl(l,\Br(X_{\overline{l}})\bigr) \longrightarrow \homology^{p}\!\bigl(k,\Br(X'_{\overline{k}})\bigr)
			\]
			are isomorphisms for every \(i \ge 0\).
			\item If the homomorphism \(\epsilon_X\) in \eqref{epsilon} is an isomorphism, then the homomorphisms
\[
			\phi: \Br_1(X)/\Br(l) \longrightarrow \Br_1(X')/\Br(k)
			\]
			\[
			\phi: \Br(X)/\Br(l) \longrightarrow \Br(X')/\Br(k)
			\]
			are isomorphisms.
		\end{enumerate}
		
	\end{proposition}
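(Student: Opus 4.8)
The plan is to treat the three assertions in turn, exploiting throughout the product decomposition $X'_{\overline{k}}=\prod_{i=1}^{d}X_{\overline{k},i}$ together with Proposition~\ref{Proposition:productofcohomology} and Shapiro's lemma. For (a), I would observe that each factor $X_{\overline{k},i}$ is isomorphic to $X_{\overline{l}}$ via the embedding $\sigma_i$, so that $\Pic(X_{\overline{k},i})\cong\Pic(X_{\overline{l}})$ is torsion-free; since a finite product of smooth projective geometrically integral varieties with torsion-free geometric Picard groups again has torsion-free geometric Picard group---Proposition~\ref{Proposition:productofcohomology}(b) and the remark following it, applied by induction on the number of factors---the decomposition $\Pic(X'_{\overline{k}})\cong\bigoplus_{i=1}^{d}\Pic(X_{\overline{k},i})$ shows that $\Pic(X'_{\overline{k}})$ is torsion-free.

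For (b), the crucial point is that $\phi_{i,j}$ is nothing but the Shapiro isomorphism read through the product decomposition. Indeed, for $q=1,2$ Proposition~\ref{Proposition:productofcohomology}(b) makes the $\Gamma_k$-equivariant map $p_1^{*}+\cdots+p_d^{*}$ an isomorphism $\bigoplus_{i=1}^{d}H^{q}(X_{\overline{k},i},\mathbb{G}_m)\xrightarrow{\simeq}H^{q}(X'_{\overline{k}},\mathbb{G}_m)$, and the source has already been identified with the induced module $\operatorname{Ind}_{\Gamma_l}^{\Gamma_k}H^{q}(X_{\overline{l}},\mathbb{G}_m)$. Passing to Galois cohomology and invoking Shapiro's lemma $H^{i}\!\bigl(k,\operatorname{Ind}_{\Gamma_l}^{\Gamma_k}M\bigr)\cong H^{i}(l,M)$ then exhibits $\phi_{i,1}$ and $\phi_{i,2}$ as isomorphisms for every $i\ge 0$.

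For (c), I would compare the exact sequences \eqref{sequence2.2} for $X/l$ and for $X'/k$ through the vertical maps induced by $\phi$ and the $\phi_{i,j}$. By (b) the columns on $\Br(-_{\overline{k}})^{\Gamma}$ and on $H^{2}\!\bigl(-,\Pic(-_{\overline{k}})\bigr)$ are isomorphisms. For the $\Br_1$ column I would exploit the commuting square $\epsilon_{X'}\circ\phi=\phi_{1,1}\circ\epsilon_{X}$: the right-hand composite is an isomorphism because $\epsilon_X$ is one by hypothesis and $\phi_{1,1}$ is one by (b); since $\epsilon_{X'}$ is injective by \eqref{sequence2.1}, it follows that $\epsilon_{X'}$ is itself an isomorphism and hence that $\phi$ restricts to an isomorphism $\Br_1(X)/\Br(l)\xrightarrow{\simeq}\Br_1(X')/\Br(k)$. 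The five-lemma applied to the two four-term exact sequences then gives that $\phi\colon\Br(X)/\Br(l)\to\Br(X')/\Br(k)$ is an isomorphism.

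The step I expect to require the most care is not any single computation but the verification that these maps are mutually compatible: one must know that the comparison homomorphism $\phi_{i,j}$ coming from the morphism of Leray spectral sequences really is the Shapiro isomorphism under the product decomposition, and that the induced maps respect the spectral-sequence filtration so that the diagram in (c) commutes. This compatibility is exactly what the construction following \cite{CL22} in the paragraphs above provides, via functoriality of the Leray spectral sequence; once it is in hand, parts (b) and (c) reduce to formal applications of Shapiro's lemma and the five-lemma.
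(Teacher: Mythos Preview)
Your proposal is correct and follows essentially the same route as the paper: parts (a) and (b) are obtained from the product decomposition of $X'_{\overline{k}}$ via Proposition~\ref{Proposition:productofcohomology} together with Shapiro's lemma, and part (c) is deduced by first showing $\epsilon_{X'}$ is an isomorphism (from $\epsilon_{X'}\circ\phi=\phi_{1,1}\circ\epsilon_X$ with $\epsilon_X$ and $\phi_{1,1}$ isomorphisms and $\epsilon_{X'}$ injective) and then applying the five-lemma to the exact sequence~\eqref{sequence2.2}. Your remark about the compatibility of the $\phi_{i,j}$ with the spectral-sequence maps is well placed; the paper handles this by deferring to the construction in \cite{CL22}.
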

	
	\begin{proof}
		(a) This follows directly from Proposition~\ref{Proposition:productofcohomology}(b).
		
		(b) Also from the Proposition~\ref{Proposition:productofcohomology}(b), the homomorphism
		\[
		\operatorname{Ind}_{\Gamma_l}^{\Gamma_k} \homology^{q}(X_{\overline{k}},\mathbb{G}_m)=\bigoplus_{s=1}^{d} \homology^{q}(X_{\overline{k},s},\mathbb{G}_m) \xrightarrow{p_{1}^{*}+p_{2}^{*}+\cdots+p_{d}^{*}} \homology^{q}\Bigl(\prod_{s=1}^{d} X_{\overline{k},s},\mathbb{G}_m\Bigr) = \homology^{q}(X'_{\overline{k}},\mathbb{G}_m)
		\]
		is an isomorphism of $\Gamma_k$-modules for \(q = 1, 2\). The conclusion then follows from Shapiro's lemma.
		
		(c) Consider the commutative diagram
		\[
		\begin{tikzcd}
			0 & \Br_{1}(X)/\Br(l) & H^{1}(l,\Pic(X_{\overline{k}})) & 0 \\
			0 & \Br_{1}(X')/\Br(k) & H^{1}(k, \Pic(X'_{\overline{k}})) & \operatorname{coker}(\epsilon_{X'})
			\arrow[from=1-1, to=1-2]
			\arrow[from=1-2, to=1-3]
			\arrow["\phi", from=1-2, to=2-2]
			\arrow[from=1-3, to=1-4]
			\arrow["\phi", from=1-3, to=2-3]
			\arrow["\phi", from=1-4, to=2-4]
			\arrow[from=2-1, to=2-2]
			\arrow[from=2-2, to=2-3]
			\arrow[from=2-3, to=2-4]
		\end{tikzcd}
		\]
		We have the isomorphism \(\Br_{1}(X)/\Br(l)\simeq H^{1}(l,\Pic(X_{\overline{k}}))\). Since the second vertical homomorphism \(\phi\) is an isomorphism by assertion (b), the commutativity of the diagram implies that the homomorphism \(\Br_{1}(X')/\Br(k)\rightarrow \homology^{1}(k,\Pic(X'_{\overline{k}}))\) is surjective. We deduce an isomorphism
		\[
		\Br_{1}(X')/\Br(k)\simeq \Br_{1}(X')/\Br(k).
		\]
		
		Now consider the following exact sequence
		\[
		\begin{tikzcd}
			0 & \Br_{1}(X)/\Br(l) & \Br(X)/\Br(l) & \Br(X_{\overline{k}})^{\Gamma_{l}} & \homology^{2}(l,\Pic(X_{\overline{k}})) \\
			0 & \Br_{1}(X')/\Br(k) & \Br(X')/\Br(k) & \Br(X'_{\overline{k}})^{\Gamma_{k}} & \homology^{2}(k,\Pic(X'_{\overline{k}}))
			\arrow[from=1-1, to=1-2]
			\arrow[from=1-2, to=1-3]
			\arrow["\simeq"', from=1-2, to=2-2]
			\arrow[from=1-3, to=1-4]
			\arrow["\phi"', from=1-3, to=2-3]
			\arrow[from=1-4, to=1-5]
			\arrow["\simeq"', from=1-4, to=2-4]
			\arrow[from=2-1, to=2-2]
			\arrow[from=2-2, to=2-3]
			\arrow[from=2-3, to=2-4]
			\arrow[from=2-4, to=2-5]
			\arrow["\simeq"', from=1-5, to=2-5]
		\end{tikzcd}
		\]
		We have just seen that $\Br_{1}(X)/\Br(l)$ is isomorphic to $\Br_{1}(X')/\Br(k)$. Also, assertion (b) ensures that $\Br(X_{\overline{k}})^{\Gamma_{l}}\simeq \Br(X_{\overline{k}}')^{\Gamma_{k}}$ and $\homology^{2}(l,\Pic(X_{\overline{k}}))\simeq \homology^{2}(k,\Pic(X_{\overline{k}}'))$. Applying the five lemma, we conclude that the second vertical homomorphism \(\phi\) is also an isomorphism.
	\end{proof}
	
	\begin{remark}
		Note that over number fields, the assumption in Proposition~\ref{Proposition:WeirestricionBrauer}(c) is automatically satisfied.
	\end{remark}
		
When $l/k$ is an extension of number fields,
		the Weil restriction yields a natural identification
		\[
		\Phi: X(\A_{l}) \longrightarrow X'(\A_{k}).
		\]
		Using the compatibility of this map with the Brauer–Manin pairing in \cite[Section 2.4]{CL22}, one deduces equalities of Brauer--Manin sets.
		\begin{corollary}\label{Corollary:identityofWeil}
			Let \(l/k\) be an extension of number fields, and let \(X\) be a smooth, projective, geometrically integral variety over \(l\) such that \(\Pic(X_{\overline{k}})\) is  torsion‑free. Then \(\Phi\) induces  identifications
		\[
		X(\A_{l})^{\Br_1}=X'(\A_{k})^{\Br_1}\mbox{ and } X(\A_{l})^{\Br}=X'(\A_{k})^{\Br},
		\]
			where \(X' = \mathrm{Res}_{l/k}(X)\) is the Weil restriction of $X$. 
		
		\end{corollary}

Because  $\textup{Hom}_{\textup{cts}}(\pi_1^{\textup{ab}}(\overline{X}),\RR/\ZZ)=\Pic(\overline{X})_\textup{tor}$ for smooth projective varieties, cf. \cite[Remark 3.3]{CHto}, the preceding corollary recovers \cite[Theorem 1.3]{CHto} and the projective case of \cite[Theorem 1.2]{CHto}.  Their argument made use of descent theory while we compare relevant Brauer groups.

	\subsection{Weil restriction as a geometric quotient}
	\label{Section:Weilquotient}
	
	We recall how the Weil restriction of scalars can be viewed as a geometric quotient, which plays an important role in the forthcoming proof of the main theorems. We refer to \cite[Section 1.3]{Wei82} for the definition of Weil restrictions.
	
	Let \(X\) be a quasi-projective variety over a field \(l\), and let \(l/k\) be a separable field extension of degree \(d\). Choose a Galois closure \(K/k\) that contains \(l\) as a subfield.  Let  \(\{\sigma_s\}_{s=1}^d\) be the set of all \(k\)-embeddings of \(l\) into \(K\). Consider the $K$-variety
	\[
	Y = \prod_{s=1}^{d} X_{K,s},
	\]
	where each factor $X_{K,s}$ is the base change of \(X\) to \(K\) via the embedding \(\sigma_s\). A $k$-variety $W$ together with a $l$-morphism $q: W\times_{k}l \to X$ gives rise to a $K$-morphism
	\begin{equation*}
		q^{\sigma_s}:W_{K}\to X_{K,s},
	\end{equation*} which makes up a $\Gamma_{k}$-equivariant $K$-morphism
	\begin{equation*}
		\prod_{s}q^{\sigma_s}: W_{K}\to Y.
	\end{equation*}
	
	\begin{definition}[{\cite[Section 1.3]{Wei82}\label{definition:Weil}}]
		The Weil restriction \(X'\coloneqq \operatorname{Res}_{l/k}(X)\) of $X$ is a variety over \(k\) equipped with a morphism
		\[
		p: X' \times_k l \longrightarrow X
		\]
		such that the $\Gamma_{k}$-equivariant morphism
		\[
		\prod_{\sigma_s} p^{s}:X'_K \stackrel{\sim}{\longrightarrow} Y.
		\]
		is an isomorphism.
		
	\end{definition}
	
	In fact, the Galois group \(\operatorname{Gal}(K/k)\) acts naturally on the product \(Y\), and the morphism \(p\) provides a descent datum that exhibits \(X'\) as a Galois descent of \(Y\). Consequently, \(X'\) can be identified with the geometric quotient \(Y / \operatorname{Gal}(K/k)\) of $Y$ viewed as a $k$-variety.
	
	Note that the construction above is independent of the choice of the Galois closure \(K\).

	\section{Unramified Brauer groups of symmetric products}\label{Section:Symmetricproduct}
	Let \(n\) be a positive integer. The symmetric group \(S_n\) acts naturally on \(W^n\) by permuting the product factors. If \(W\) is a quasi‑projective variety over a field \(k\), the geometric quotient \(W^n/S_n\) exists as a variety together with a finite surjective canonical morphism
	\[
	W^n \longrightarrow W^n/S_n,
	\]  
cf. \cite[Chapter II, \S7 and Chapter III, \S12]{Mum08}. We define this quotient to be the $n$-fold \emph{symmetric product} of \(W\) and denote it by \(\operatorname{Sym}^n_{W}\).
	
Let $H_{i}=\{\sigma \in S_n \mid \sigma(i)=i\}\subset S_n$ be the stabilizer subgroup  with respect to $1\in\{1,2,\ldots,n\}$, on which $S_n$ acts by permutation. Then $H_i\simeq S_{n-1}$ and we obtain an isomorphism \begin{equation}\label{eq:nn-1}W^{n}/H_{i}\simeq W\times \Sym_{W}^{n-1}.\end{equation} 

	Now let \({W_0} \subseteq W^n\) be an \(S_n\)-stable open subset.
    As the restriction of $i$-th projection \(\operatorname{pr}_i :W^n \to W\) to ${W_0}$ is invariant under the action of $H_i$, it induces a morphism  ${W_0}/H_i \longrightarrow W$, which is nothing but the restriction to ${W_0}/H_i$ of the projection onto the $W$ factor via the isomorphism \eqref{eq:nn-1}.
	
	If we further assume that \(S_n\) acts \emph{freely} on \({W_0}\) and that \({W_0}\) is smooth, quasi‑projective and geometrically integral, then the quotient \({W_0}/S_n\) is smooth and the natural morphism  ${W_0}/H_i \longrightarrow {W_0}/S_n$
is finite \'etale.

	Let \(\Delta = \Delta_W^n\) denote the union of the diagonals  in \(W^n\) as defined in \cite[Section~4]{CZ24}. Then the symmetric group \(S_n\) acts freely on \({W_0}=W^n \backslash\Delta\). We denote the open subset \((W^n \backslash \Delta)/S_n\) of \(\operatorname{Sym}_{W}^n\) by  \(\operatorname{Sym}_{W}^{n,o}\). We will explain in Section \ref{Section:ratpt0cyc} the identification of the set of rational points of \(\operatorname{Sym}_{W}^n\) and the set of effective \(0\)-cycles on \(W\) of degree \(n\), where rational points of \(\operatorname{Sym}_{W}^{n,o}\) correspond to effective separable \(0\)-cycles.
	
	For each $n$, we fix a smooth projective model \(\operatorname{Sym}_{W}^{n,\mathrm{sm}}\) of \(\operatorname{Sym}_{W}^n\). Note that if the quotient \(\Brnr(k(\operatorname{Sym}_{W}^{n})/k)/\operatorname{Br}(k)\) is finite, then HPBMrp (respectively, WABMrp) does not depend on the choice of such a model. For HPBMrp this follows from the Lang–Nishimura theorem; for WABMrp we refer to \cite[Proposition 6.1]{CPS16}.
	
	\begin{lemma}
		If $W$ is geometrically reduced (resp. geometrically irreducible, geometrically integral, quasi-projective, projective) over k, then so is $\Sym_{W}^{n}$. If $W$ is smooth, then so is $\Sym_{W}^{n,o}$. 
	\end{lemma}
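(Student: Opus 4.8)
The plan is to reduce every assertion to the case of an algebraically closed base field and then treat the properties one at a time. The crucial preliminary observation is that the formation of the symmetric product commutes with arbitrary field extensions: for any extension $K/k$ there are canonical isomorphisms $(\Sym_{X/k}^{n})_K \simeq \Sym_{X_K/K}^{n}$ and $(\Sym_{X/k}^{n,o})_K \simeq \Sym_{X_K/K}^{n,o}$. Indeed, on an $S_n$-invariant affine open $\Spec A \subseteq X^n$ the quotient is $\Spec A^{S_n}$, and taking $S_n$-invariants commutes with the flat base change $-\otimes_k K$, since $A^{S_n}$ is the kernel of the map $A \to \prod_{\sigma\in S_n} A$, $a\mapsto(\sigma a - a)_\sigma$, and this kernel is preserved by the exact functor $-\otimes_k K$. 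Consequently a \emph{geometric} property of $\Sym_{X/k}^{n}$ can be verified after base change to $\overline{k}$, where it becomes the corresponding property of $(X_{\overline{k}})^n/S_n$.

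First I would treat reducedness. Passing to $\overline{k}$, it suffices to show that $(X_{\overline{k}})^n/S_n$ is reduced when $X_{\overline{k}}$ is reduced. On an invariant affine chart $\Spec A$, the scheme $(X_{\overline{k}})^n$ is reduced (a product of reduced schemes over an algebraically closed field is reduced), so $A$ is reduced, and then $A^{S_n}\subseteq A$ is a subring of a reduced ring, hence reduced. For irreducibility, again pass to $\overline{k}$: if $X_{\overline{k}}$ is irreducible then so is $(X_{\overline{k}})^n$, and since the quotient map $X^n \to \Sym_{X/k}^{n}$ is continuous and surjective (as recalled in the preceding discussion), it sends the irreducible space $(X_{\overline{k}})^n$ onto an irreducible space. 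Combining the reduced and irreducible cases yields the geometrically integral case.

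Next, the quasi-projective and projective cases follow from the classical fact that the geometric quotient of a (quasi-)projective variety by a finite group is again (quasi-)projective: in the quasi-projective case one covers $X^n$ by $S_n$-invariant affine opens and glues the affine quotients, while in the projective case one uses in addition that $X^n \to \Sym_{X/k}^{n}$ is finite and surjective with $X^n$ proper over $k$, so that $\Sym_{X/k}^{n}$ is proper over $k$ and hence projective once it is known to be quasi-projective.

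Finally, for the smoothness of $\Sym_{X/k}^{n,o}$ when $X$ is smooth, I would exploit that $S_n$ acts \emph{freely} on $X^n\setminus\Delta$, as recorded above. Since $X$ is smooth, $X^n$ is smooth and its open subscheme $X^n\setminus\Delta$ is smooth; because the action is free, the quotient map $X^n\setminus\Delta \to \Sym_{X/k}^{n,o}$ is a finite étale $S_n$-torsor. One then transports smoothness across this étale surjection: after base change to $\overline{k}$, the regularity of $(X_{\overline{k}})^n\setminus\Delta$ is inherited by the quotient, because an étale surjective morphism both preserves and reflects regularity of local rings, and over the perfect field $\overline{k}$ regularity is equivalent to smoothness. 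Hence $\Sym_{X/k}^{n,o}$ is geometrically regular, i.e.\ smooth over $k$. I expect the base-change compatibility and this last smoothness descent along the free quotient to require the most care; the reducedness and irreducibility statements are then immediate from the subring-of-a-reduced-ring and image-of-an-irreducible principles.
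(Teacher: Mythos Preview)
The paper does not supply a proof of this lemma at all; it is stated as a known fact (``one knows that\ldots'') and the text moves on immediately. Your proposal therefore cannot be compared to a paper argument, but it is a correct and self-contained justification of the statement. The base-change compatibility via exactness of $-\otimes_k K$ on the kernel description of $A^{S_n}$ is the right mechanism (and implicitly uses that $X^n$ admits $S_n$-invariant affine opens, which is guaranteed by the standing quasi-projectivity hypothesis in the paper); the reducedness and irreducibility steps are clean; the (quasi-)projectivity claim is exactly the classical fact cited from Mumford in the paper's setup; and the smoothness of $\Sym_{X/k}^{n,o}$ via the free $S_n$-action making the quotient map an \'etale torsor is the standard and correct argument. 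One minor remark: your sentence ``a product of reduced schemes over an algebraically closed field is reduced'' relies on the characteristic-zero hypothesis fixed at the outset of the paper (or more generally perfectness of the base), so it would be worth flagging that dependence explicitly.
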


	We pass to a relative version. Let \(f: Y \to Z\) be a dominant morphism between smooth, quasi‑projective, geometrically integral varieties defined over \(k\). It naturally induces a morphism $f^{n}:Y^{n}\to Z^{n}$. Let \(U \subset Z^n\) be the open subset \(Z^n \setminus \Delta_Z^n\), and set \(V := (f^n)^{-1}(U)\). Then the symmetric group \(S_n\) acts freely on \(V\). Consequently, the induced morphism  
	\[
	f_{n}: \operatorname{Sym}_{Y}^n \longrightarrow \operatorname{Sym}_{Z}^n
	\]  
	satisfies  
	\[
	f_{n}^{-1}\bigl(\operatorname{Sym}_{Z}^{n,o}\bigr) \subset \operatorname{Sym}_{Y}^{n,o}.
	\]
	
	\begin{lemma}
		\label{Lemma:cartesianofquotient}
		Let \(f: Y \to Z\) be a dominant morphism between smooth, quasi-projective varieties over a field k, let \(U \subset Z\) be an open subset on which the symmetric group \(S_n\) acts freely. Set \(V := f^{-1}(U)\). 
Then we have a commutative diagram with  Cartesian squares
		\[
		\begin{tikzcd}
			V \arrow[r, "p_V"] \arrow[d] & V/H_1 \arrow[r, "q_V"] \arrow[d] & V/S_n \arrow[d] \\
			U \arrow[r, "p_U"] & U/H_1 \arrow[r, "q_U"] & U/S_n
		\end{tikzcd}
		\]
		where the vertical arrows are induced by \(f: Y \to Z\).
	\end{lemma}
	
	\begin{proof}
	Since $S_{n}$ acts on $U$ freely, we know that $U$ is a $U/S_{n}$-torsor under $S_{n}$, hence $U\times_{U/S_{n}} V/S_{n}$ is a $V/S_{n}$-torsor under $S_{n}$. At the same time $V$ is also a $V/S_{n}$-torsor under  $S_{n}$. The induced morphism from $V$ to $U\times_{U/S_{n}}V/S_{n}$ must be  an isomorphism $V\simeq U\times_{U/S_{n}} V/S_{n}$. We know that the big square is Cartesian, so is the left square. Then by the faithfully flat descent, we know that the right square is also Cartesian.	
	\end{proof}

	\subsection{Comparison homomorphisms between relevant Brauer groups}
	Consider the natural projection $\pr_{1}$ and the finite \'etale morphism $u$ for a smooth quasi-projective geometrically integral variety \(W\) over \(k\)
	\begin{equation*}
		W \xleftarrow{\pr_1} (W^n \setminus \Delta)/H_1 \xrightarrow{u} \Sym_{W}^{n,o} 
	\end{equation*}
and morphisms induced between the relevant function fields
	\begin{equation*}
		\Spec(k(W)) \xleftarrow{\pr_1} \Spec(k((W^n \setminus \Delta)/H_1)) \xrightarrow{u} \Spec(k(\Sym_{W}^{n,o})).
	\end{equation*}

Define $\lambda$ and $\lambda_\textup{nr}$ as the compositions
\begin{equation*}\label{lambda}\begin{tikzcd}
		\Br(W)& \Br((W^n \setminus \Delta)/H_1)) & \Br(\Sym_{W}^{n,o})
		\arrow["\textup{pr}_1^*",from=1-1, to=1-2]
		\arrow["\lambda", curve={height=-30pt}, from=1-1, to=1-3]
		\arrow["u_*", from=1-2, to=1-3]
	\end{tikzcd}\end{equation*}	
\begin{equation*}\label{lambda_nr}\begin{tikzcd}
		\Brnr(k(W)/k)& \Brnr(k((W^n \setminus \Delta)/H_1)/k) & \Brnr(k(\Sym_{W}^{n,o})/k)
		\arrow["\textup{pr}_1^*",from=1-1, to=1-2]
		\arrow["\lambda_{\textup{nr}}", curve={height=-30pt}, from=1-1, to=1-3]
		\arrow["u_*", from=1-2, to=1-3]
	\end{tikzcd}\end{equation*}
of restriction homomorphism $\textup{pr}_1^*$ and corestriction homomorphism $u_*$ between Brauer groups (\cite[Chapter 3.8]{CTS21}) and between unramified Brauer groups (see \cite[Proposition 6.2.3, 6.2.4]{CTS21}). 

For regular integral varieties, \cite[Theorem 3.7.3]{CTS21} ensures that the unramified Brauer group is naturally a subgroup of the Brauer group. When in addition the variety is proper, we can identify these two groups.  Hence, if $W$ is proper we have  \(\Br(W)= \Brnr(k(W)/k)\) and $\lambda$ factors through $\lambda_\textup{nr}$. We pass to quotients to obtain
	\begin{equation}
		\lambda : \Br(W)/\Br(k) \longrightarrow \Brnr\bigl(k(\Sym_{W}^{n,o})/k\bigr)/\Br(k). \label{*}\tag{$\star$}
	\end{equation}
 still denoted by \(\lambda\) by abuse of notation.
We will explain later in Remark \ref{remarklambda} the reason why we construct $\lambda$ in such a way.	

	\begin{proposition}[Functoriality of $\lambda$]	\label{Proposition: CommutativeforBrauer}
		Let \(Y\) and \(Z\) be smooth, projective, geometrically integral \(k\)-varieties, and let \(f: Y \to Z\) be a dominant morphism of \(k\)-varieties. Then the following diagram commutes
		
		\[
		\begin{tikzcd}
			\Br(Z) \arrow[r, "\lambda"] \arrow[d] & \Brnr(k(\operatorname{Sym}_{Z}^n)/k) \arrow[d] \\
			\Br(Y) \arrow[r, "\lambda"] & \Brnr(k(\operatorname{Sym}_{Y}^n)/k)
		\end{tikzcd}
		\]

	\end{proposition}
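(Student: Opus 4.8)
The plan is to unwind the definition of $\lambda$ and reduce the commutativity to a base-change compatibility of corestriction along a Cartesian square of finite étale maps. Recall that $\lambda_X$ is the composite $\cor_{q_2}\circ\pr_1^{*}$ attached to the correspondence $X\xleftarrow{\pr_1}(X^n\setminus\Delta)/H_1\xrightarrow{u=q_2}\Sym_{X/k}^{n,o}$, and likewise for $Y$. Put $U=X^n\setminus\Delta_X^n$ and $V=(f^n)^{-1}(U)$. Since $f^n$ is dominant, $V$ is dense open in $Y^n$ and is contained in $Y^n\setminus\Delta_Y^n$, so $V/H_1$ and $V/S_n$ are dense open in $(Y^n\setminus\Delta_Y)/H_1$ and $\Sym_{Y/k}^{n,o}$ respectively. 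As all schemes in sight are smooth and the unramified Brauer group depends only on the function field (and $\Brnr(k(\Sym_{X/k}^n)/k)=\Brnr(k(\Sym_{X/k}^{n,o})/k)$), I may compute $\lambda_Y$ using the $Y$-side correspondence $Y\xleftarrow{\pr_1}V/H_1\xrightarrow{p_2}V/S_n$ without any change.

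Applying Lemma~\ref{Lemma:Cartesian} with the subgroup $H_1$ yields the commutative diagram with both squares Cartesian
\[
\begin{tikzcd}
V \arrow[r, "p_1"] \arrow[d] & V/H_1 \arrow[r, "p_2"] \arrow[d, "g"] & V/S_n \arrow[d, "f_n"] \\
U \arrow[r, "q_1"] & U/H_1 \arrow[r, "q_2"] & U/S_n
\end{tikzcd}
\]
in which the vertical maps are induced by $f^n$ and $q_2$ (hence $p_2$) is finite étale. The identity to be proved is $f_n^{*}\circ\lambda_X=\lambda_Y\circ f^{*}$, i.e. $f_n^{*}\circ\cor_{q_2}\circ\pr_1^{*}=\cor_{p_2}\circ\pr_1^{*}\circ f^{*}$. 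I split it into two pieces. First, since $H_1$ fixes the first coordinate, the first projections commute with $f^n$, so $f\circ\pr_1=\pr_1\circ g$ as maps $V/H_1\to X$; by functoriality of pullback this gives $\pr_1^{*}\circ f^{*}=g^{*}\circ\pr_1^{*}$ on Brauer groups, and the right-hand side becomes $\cor_{p_2}\circ g^{*}\circ\pr_1^{*}$. It therefore remains to establish the base-change formula $f_n^{*}\circ\cor_{q_2}=\cor_{p_2}\circ g^{*}$ for the right-hand Cartesian square.

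For this I pass to function fields. Write $F=k(\Sym_{X/k}^n)$, $E=k((X^n\setminus\Delta)/H_1)$, $F'=k(\Sym_{Y/k}^n)$ and $E'=k(V/H_1)$; then $q_2,p_2$ induce finite separable extensions $E/F$ and $E'/F'$ of degree $n=[S_n:H_1]$, while $f_n,g$ induce the inclusions $F\hookrightarrow F'$ and $E\hookrightarrow E'$. Because the right square is Cartesian and $f_n$ is dominant, the generic fibre of $p_2$ is $\Spec(E\otimes_F F')$; since $V$ is geometrically integral, $V/H_1$ is integral, so this fibre is irreducible and reduced, whence $E\otimes_F F'=E'$ is a \emph{field}. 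We are thus in the linearly disjoint situation, in which the Galois-cohomological corestriction and restriction commute:
\[
\operatorname{res}_{F'/F}\circ\cor_{E/F}=\cor_{E'/F'}\circ\operatorname{res}_{E'/E}.
\]
Translating back, $f_n^{*}$ and $g^{*}$ are the restriction maps $\operatorname{res}_{F'/F}$ and $\operatorname{res}_{E'/E}$, while $\cor_{q_2},\cor_{p_2}$ are $\cor_{E/F},\cor_{E'/F'}$. Since the corestriction on unramified cohomology is the restriction of the field-theoretic one to the unramified subgroup, and restriction maps preserve unramified classes by \cite[Prop.~6.2.3, 6.2.4]{CTS21}, the displayed identity descends to $\Brnr$ and gives exactly $f_n^{*}\circ\cor_{q_2}=\cor_{p_2}\circ g^{*}$, completing the proof.

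The main obstacle is the base-change formula for corestriction: one must verify that $E\otimes_F F'$ is genuinely a field, so that the clean Mackey identity applies rather than a sum over several components, and that the field-level identity is compatible with the corestriction and pullback maps as defined on \emph{unramified} Brauer groups in \cite[Chapter~6]{CTS21}. The irreducibility of $V/H_1$—which rests on $Y$ being geometrically integral and on $S_n$ acting freely on $V$—is precisely what puts us in the clean case; once that is in hand, the remaining points are the standard functorialities of unramified cohomology.
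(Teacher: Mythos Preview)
Your proof is correct and follows essentially the same approach as the paper: both decompose $\lambda$ as $\cor\circ\pr_1^{*}$, invoke the Cartesian square of Lemma~\ref{Lemma:Cartesian}, and combine the obvious commutativity $f\circ\pr_1=\pr_1\circ g$ with a base-change identity $f_n^{*}\circ\cor_{q_2}=\cor_{p_2}\circ g^{*}$. The only difference is that the paper dispatches the base-change step by citing \cite[Proposition~3.8.1]{CTS21} directly, whereas you unpack it at the level of function fields via the linear-disjointness observation $E\otimes_F F'\cong E'$; both are valid and amount to the same compatibility.
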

	\begin{proof}
		Applying \cite[Proposition 3.8.1]{CTS21} to the right square of Lemma~\ref{Lemma:cartesianofquotient}, we obtain the commutative diagram
		
		\[
		\begin{tikzcd}
			\Brnr(k(U/H_{1})/k) \arrow[r, "u_*"] \arrow[d] & \Brnr(k(\operatorname{Sym}_{Z}^n)/k) \arrow[d] \\
			\Brnr(k(V/H_{1})/k) \arrow[r, "u_*"] & \Brnr(k(\operatorname{Sym}_{Y}^n)/k)
		\end{tikzcd}
		\]
The desired square is obtained by composing the above square  with the restriction square induced by $\textup{pr}_1$.
	\end{proof}

	\subsection{Generic fibers of morphisms between symmetric products}
	Let \(f: Y \rightarrow Z\) be a dominant morphism between smooth quasi-projective and geometrically integral varieties defined over a field $k$.
 In this section, we aim to describe the generic fiber of the induced morphism  
	\[
	f_n: \operatorname{Sym}_{Y}^n \rightarrow \operatorname{Sym}_{Z}^n.
	\]

	Let \( \eta \) be the generic point of \( Z \) and \( \xi \) be the generic point of \( \operatorname{Sym}_{Z}^n \). Let \( \theta \) be the generic point of \( Z^n \). Since \( Z \) is geometrically integral, the point \( \theta \) is also the generic point of \( \operatorname{Spec}\bigl(k(\eta)\bigr)^{n} \). We then obtain the following diagram with Cartesian squares
	
	\begin{equation}\label{YnZngenericfiber}
	\begin{tikzcd}
		\prod\limits_{i=1}^{n}Y_{\eta}\otimes_{k(\eta),\pr_{i}}k(\theta) & Y_{\eta}^{n} & Y^{n} \\
		\operatorname{Spec}(k(\theta)) & \operatorname{Spec}\bigl(k(\eta)\bigr) ^{n}& Z^{n}.
		\arrow[from=1-1, to=1-2]
		\arrow[from=1-1, to=2-1]
		\arrow[from=1-2, to=1-3]
		\arrow[from=1-2, to=2-2]
		\arrow[from=1-3, to=2-3]
		\arrow[from=2-1, to=2-2]
		\arrow[from=2-2, to=2-3]
	\end{tikzcd}
	\end{equation}
		Here \( \pr_i \) denotes not only the $i$-th projection but also the \(i\)-th 
 embedding $\pr_i: k(\eta)\to k(\theta)$ of fields induced by the composition
	\[
	\operatorname{Spec}(k(\theta)) \rightarrow \operatorname{Spec}\bigl(k(\eta)\bigr) ^n\xrightarrow{\pr_i} \operatorname{Spec}(k(\eta)).
	\]
The symmetric group $S_n$ acts  on the diagram equivariantly by permuting the factors of the varieties.
	It follows by passing to quotients that the generic fiber of the morphism  
	\[
	f_n: \operatorname{Sym}_{Y}^n \rightarrow \operatorname{Sym}_{Z}^n
	\]  
	is 
	\[
	\Bigl(\prod\limits_{i=1}^{n} Y_{\eta} \otimes_{k(\eta),\pr_i} k(\theta)\Bigr) / S_n.
	\]

	Since \(\dim(Z) \geq 1\), the point \(\theta\) is contained in the separable locus of the morphism \(Z^n \to Z^n / S_n\). Consequently, \(k(\theta)\) is a Galois extension of \(k(\xi)\) of Galois group \(S_n\). Define \(L_i=k(\theta)^{H_i}\). Since the only normal subgroup of $S_{n}$ contained in $H_{i}$ is the trivial group, we know that $k(\theta)$ is the Galois closure of each \(L_i\) such that $\Gal(k(\theta)/L_i)=H_i$. It is straightforward to observe that $L_i = k(Z^n / H_i)$.
The composition of $$Z^n\to Z^{n}/H_i\simeq Z\times \Sym_{Z}^{n-1}$$ with the natural projection onto $Z$ leads to finite extensions $k(\eta)\subset L_i\subset k(\theta)$ of fields.
Set $L=L_1$. Let $\gamma_i$ be the transposition in $S_n=\operatorname{Gal}(k(\theta)/k(\xi))$ switching $1$ and $i$. Let $\iota_i:L\to k(\theta)$   be the restriction to $L$ of the $k(\xi)$-automorphism $\gamma_i:k(\theta)\to k(\theta)$. Then it maps $L$ isomorphically to $L_i$ by Galois theory and it extends   $\pr_i:k(\eta)\to k(\theta)$.

	Applying Lemma~\ref{Lemma:cartesianofquotient} to the morphism $\prod_{i=1}^{n}Y_{\eta}\otimes_{k(\eta),\pr_{i}}k(\theta)\rightarrow \Spec(k(\theta))$, we get the following Cartesian diagram
	\[
	\begin{tikzcd}[row sep=large, column sep=large]
		\bigl(\prod_{i=1}^n Y_\eta \otimes_{k(\eta), \pr_i} k(\theta)\bigr) / H_1 \arrow[r] \arrow[d] &
		\bigl(\prod_{i=1}^n Y_\eta \otimes_{k(\eta), \pr_i} k(\theta)\bigr) / S_n \arrow[d] \\
		\operatorname{Spec}(L) \arrow[r] &
		\operatorname{Spec}(k(\xi)).
	\end{tikzcd}
	\]
	In other words, we have an isomorphism
	\begin{equation}
	\bigl(\prod_{i=1}^n Y_\eta \otimes_{k(\eta), \pr_i} k(\theta)\bigr) / S_n \otimes_{k(\xi)}L\simeq \bigl(\prod_{i=1}^n Y_\eta \otimes_{k(\eta), \pr_i} k(\theta)\bigr) / H_1 .
	\label{Equation:basechange}
	\end{equation}
	
	As the projection to the first component
	\[
	\prod_{i=1}^{n} Y_{\eta} \otimes_{k(\eta), \pr_i} k(\theta) 
	\xrightarrow{} 
	Y_{\eta} \otimes_{k(\eta), \pr_1} k(\theta),
	\]
is  $H_1$-equivariant, by passing to quotients, it induces a $L$-morphism
	\begin{equation}\label{Equation:Lmorphism}
	\Bigl(\prod_{i=1}^{n} Y_{\eta} \otimes_{k(\eta), \pr_i} k(\theta)\Bigr) / H_1 
	\longrightarrow 
	Y_{\eta} \otimes_{k(\eta)} L.
	\end{equation}
	By base change via the $k(\xi)$-embedding $\iota_i:L\to k(\theta)$, we obtain
	\begin{equation*}
		\beta_i: 
		\Bigl(\prod_{i=1}^{n} Y_{\eta} \otimes_{k(\eta), \pr_i} k(\theta)\Bigr) / H_1 \otimes_{L,\iota_i} k(\theta)  \rightarrow Y_{\eta}\otimes_{k(\eta),\pr_i} k(\theta). 
	\end{equation*}
	According to \eqref{Equation:basechange}, the last morphism is nothing but
	\begin{equation*}
		\Bigl(\prod_{i=1}^{n} Y_{\eta} \otimes_{k(\eta), \pr_i} k(\theta)\Bigr) / S_{n}
		\otimes_{k(\xi)} k(\theta)\rightarrow Y_{\eta}\otimes_{k(\eta),\pr_i} k(\theta). 
	\end{equation*}
Defined component by component, we get a morphism
	\[
	\prod_{i=1}^n \beta_i : 
	\Bigl(\prod_{i=1}^{n} Y_{\eta} \otimes_{k(\eta), \pr_i} k(\theta)\Bigr) / S_{n}
	\otimes_{k(\xi)} k(\theta) 
	\longrightarrow 
	\prod_{i=1}^{n} Y_{\eta} \otimes_{k(\eta), \pr_i} k(\theta),
	\]
	which is  an isomorphism by Lemma~\ref{Lemma:cartesianofquotient} applied to  $\prod_{i=1}^{n}Y_{\eta}\otimes_{k(\eta),\pr_i}k(\theta)\rightarrow \Spec(k(\theta))$. Therefore, by Definition~\ref{definition:Weil}, 
	we conclude that 
	\begin{equation}\label{Equation:Weil}
	(\prod_{i=1}^{n}Y_{\eta}\otimes_{k(\eta),\pr_{i}}k(\theta))/S_{n} \simeq \Res_{L/k(\xi)}\bigl(Y_{\eta} \otimes_{k(\eta)} L\bigr),
\end{equation}
which is the generic fiber of
$f_{n}:\Sym_{Y}^{n}\rightarrow \Sym_{Z}^{n}$.

	\begin{proposition}\label{prop:commutativeBrnr}\label{Proposition:commutativeforBrnr}
Let \(f: Y \rightarrow Z\) be a dominant morphism between smooth quasi-projective and geometrically integral varieties defined over a field \(k\). Assume further that \(Y\) is projective. Then the comparison homomorphism $\lambda$ as defined in \eqref{*} fits into a commutative diagram where the bottom homomorphism between Brauer groups of relevant generic fibers is defined in \eqref{Definition:phi} 
		\[
		\begin{tikzcd}[row sep=large, column sep=large]
			\Br(Y) \arrow[r, "\lambda"] \arrow[d] &
			\Brnr\!\left(\Sym_{Y}^{n}\right) \arrow[d] \\
			\Br\!\left(Y_{\eta} \otimes_{k(\eta)} L\right) \arrow[r, "\phi"] &
			\Br\!\left(\Res_{L/k(\xi)}\left(Y_{\eta} \otimes_{k(\eta)} L\right)\right)
		\end{tikzcd}
		\]
		
	\end{proposition}
	
	\begin{proof}
		Let \(U \subset Z^{n}\) be a Zariski-open subset on which the symmetric group \(S_{n}\) acts freely. 
		Denote by \(V = f^{-1}(U)\) its preimage under \(f: Y^{n} \rightarrow Z^{n}\). It contains the the generic fiber of $f$, which is given by the $S_{n}$-equivariant Cartesian square \eqref{YnZngenericfiber}. In other words, we have an $S_{n}$-equivariant morphism
		  \begin{equation}\label{Inclusionofgeneric}
		  \prod_{i=1}^{n}Y_{\eta}\otimes_{k(\eta),\pr_{i}}k(\theta)\to V.
		 \end{equation} 
		 Applying Lemma~\ref{Lemma:cartesianofquotient}, thanks to \eqref{Equation:Weil} we obtain the Cartesian square
		\[
		\begin{tikzcd}[row sep=large, column sep=large]
			\bigl(\prod_{i=1}^n Y_\eta \otimes_{k(\eta), \pr_i} k(\theta)\bigr) / H_1  \arrow[r] \arrow[d] &
			\Res_{L/k(\xi)}\left(Y_{\eta} \otimes_{k(\eta)} L\right) \arrow[d] \\
			V/H_{1} \arrow[r] &
			V/S_{n}
		\end{tikzcd}
		\]
		
		Taking Brauer groups and using the corestriction homomorphism associated with the finite étale covers, we obtain the commutative diagram as in \cite[Proposition 3.8.1]{CTS21}
		\[
		\begin{tikzcd}[row sep=large, column sep=large]
			\Br(V/H_{1}) \arrow[r, "\mathrm{cor}=u_*"] \arrow[d] &
			\Br(V/S_{n}) \arrow[d] \\
			\Br\!\left(\bigl(\prod_{i=1}^n Y_\eta \otimes_{k(\eta), \pr_i} k(\theta)\bigr) / H_1 \right) 
			\arrow[r, "\mathrm{cor}"] &
			\Br\!\left(\Res_{L/k(\xi)}\left(Y_{\eta} \otimes_{k(\eta)} L\right)\right)
		\end{tikzcd}
		\]
		
		Since all varieties involved are regular and integral, the unramified Brauer group \(\Brnr(-)\) is a natural subgroup of \(\Br(-)\). 
		By functoriality of the unramified Brauer group, we may restrict the top row of the previous diagram to obtain
		\begin{equation}
		\begin{tikzcd}[row sep=large, column sep=large]
			\Brnr(V/H_{1}) \arrow[r] \arrow[d] &
			\Brnr(V/S_{n}) \arrow[d] \\
			\Br\!\left(\bigl(\prod_{i=1}^n Y_\eta \otimes_{k(\eta), \pr_i} k(\theta)\bigr) / H_1 \right) 
			\arrow[r] &
			\Br\!\left(\Res_{L/k(\xi)}\left(Y_{\eta} \otimes_{k(\eta)} L\right)\right)
		\end{tikzcd}
		\label{Diagram:1}
		\end{equation}
		
		On the other hand, we have a natural commutative diagram where the homomorphism on the top is \eqref{Equation:Lmorphism} and the left vertical homomorphism is induced by \eqref{Inclusionofgeneric}
		\[
		\begin{tikzcd}[row sep=large, column sep=large]
			\bigl(\prod_{i=1}^n Y_\eta \otimes_{k(\eta), \pr_i} k(\theta)\bigr) / H_1  \arrow[r] \arrow[d] &
			Y_{\eta} \otimes_{k(\eta)} L \arrow[d] \\
			V/H_{1} \arrow[r,"\pr_1"] &
			Y
		\end{tikzcd}
		\]
		which, by applying the Brauer group functor, gives
		\[
		\begin{tikzcd}[row sep=large, column sep=large]
			\Br(Y) \arrow[r,"\pr_1^*"] \arrow[d] &
			\Br(V/H_{1}) \arrow[d] \\
			\Br\!\left(Y_{\eta} \otimes_{k(\eta)} L\right) \arrow[r] &
			\Br\!\left(\bigl(\prod_{i=1}^n Y_\eta \otimes_{k(\eta), \pr_i} k(\theta)\bigr) / H_1 \right).
		\end{tikzcd}
		\]
		
As $Y$ is assumed projective, the image of top row lies in the unramified Brauer group, we obtain
		\begin{equation}
		\begin{tikzcd}[row sep=large, column sep=large]
			\Br(Y) \arrow[r,"\pr_1^*"] \arrow[d] &
			\Brnr(V/H_{1}) \arrow[d] \\
			\Br\!\left(Y_{\eta} \otimes_{k(\eta)} L\right) \arrow[r] &
			\Br\!\left(\bigl(\prod_{i=1}^n Y_\eta \otimes_{k(\eta), \pr_i} k(\theta)\bigr) / H_1 \right).
		\end{tikzcd}
		\label{Diagram2}
		\end{equation}
		
		We complete the proof by composing the diagrams (\ref{Diagram:1}) and (\ref{Diagram2}). We should remark that, the quotient $\prod_{i=1}^{n}Y_{\eta}\otimes_{k(\eta),\pr_{i}}k(\theta)/H_{1} $ and $\Res_{L/k(\xi)}\left(Y_{\eta} \otimes_{k(\eta)} L\right) \otimes_{k(\xi)} L$ are identified to each other by  \eqref{Equation:basechange} and \eqref{Equation:Weil}. Hence the composition of the bottom homomorphisms is exactly $\phi$  recalled in \eqref{Definition:phi}.
\end{proof}

	\subsection{Unramified Brauer groups for symmetric products}

	\begin{theorem}\label{theorem}
		Let \(X/k\) be a smooth, projective, geometrically integral variety such that 
		\(\Pic(X_{\overline{k}})\) is a torsion‑free abelian group. Assume, moreover, 
		that the homomorphism \(\epsilon_{X}\) in \eqref{epsilon} is an isomorphism. Then the homomorphism 
		\[
		\lambda \colon \Br(X)/\Br(k) \xrightarrow{\ \simeq\ } 
		\Brnr \!\bigl(\Sym_{X}^{n}\bigr)/\Br(k)
		\]
 as defined  in \eqref{*} is an isomorphism.
	\end{theorem}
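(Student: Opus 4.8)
The plan is to \emph{geometrize} the corestriction that defines $\lambda$, turning it into a restriction to a generic fiber so that the Weil-restriction results of Section~\ref{Section:varieties} become applicable. Concretely, I would apply the generic fiber description to the projection $f\colon X\times_k\PP^1\to\PP^1$. Writing $\eta$ for the generic point of $\PP^1$ and $\xi$ for that of $\Sym^n_{\PP^1/k}=\PP^n$, the computation preceding Proposition~\ref{Proposition:commutativeforBrnr} identifies the generic fiber of $f_n\colon \Sym^n_{(X\times\PP^1)/k}\to\PP^n$ with the Weil restriction $\Res_{L/k(\xi)}(X_L)$, where $L=k(\PP^1\times\PP^{n-1})$ is purely transcendental over $k$ (so $L/k$ is regular) and $[L:k(\xi)]=n$. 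Here $Y_\eta\otimes_{k(\eta)}L=X_{k(t)}\otimes_{k(t)}L=X_L$. Proposition~\ref{Proposition:commutativeforBrnr} then supplies a commutative square whose top arrow is $\lambda_{X\times\PP^1}$ and whose bottom arrow is the Weil-restriction map $\phi\colon \Br(X_L)\to\Br\bigl(\Res_{L/k(\xi)}(X_L)\bigr)$.

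Next I would show that three of the four maps in this square (after quotienting by the relevant constant subgroups) are controlled. The bottom map $\phi\colon \Br(X_L)/\Br(L)\to\Br(\Res_{L/k(\xi)}(X_L))/\Br(k(\xi))$ is an isomorphism by Proposition~\ref{Proposition:WeirestricionBrauer}(c); its hypothesis is that $\epsilon_{X_L}$ is an isomorphism, which holds by Lemma~\ref{Lemma:Brauerregularextension}(c) since $L/k$ is regular and $\epsilon_X$ is an isomorphism by assumption. The left map factors as $\Br(X\times\PP^1)/\Br(k)\xrightarrow{\ \simeq\ }\Br(X)/\Br(k)\to\Br(X_L)/\Br(L)$, the first isomorphism from Proposition~\ref{Proposition:productofcohomology}(c) (as $\Br(\PP^1)/\Br(k)=0$) and the second from Lemma~\ref{Lemma:Brauerregularextension}(c) again; hence the left map is an isomorphism. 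For the right map, restriction to the generic fiber, I would invoke Lemma~\ref{Lemma:injective}: the generic fiber $\Res_{L/k(\xi)}(X_L)$ is geometrically integral, so $f_n$ is split in codimension $1$ and the induced map $\Brnr(\Sym^n_{(X\times\PP^1)/k})/\Br(k)\to\Br(\Res_{L/k(\xi)}(X_L))/\Br(k(\xi))$ is injective. A diagram chase (left and bottom isomorphisms forcing the top arrow injective and the right arrow surjective, combined with the injectivity just noted) then yields that $\lambda_{X\times\PP^1}$ is an isomorphism.

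Finally I would transfer the conclusion from $X\times\PP^1$ back to $X$. The projection $\pr\colon X\times\PP^1\to X$ induces, via Proposition~\ref{Proposition: CommutativeforBrauer}, a commutative square relating $\lambda_X$ to $\lambda_{X\times\PP^1}$. The vertical arrow $\pr^*\colon \Br(X)/\Br(k)\to\Br(X\times\PP^1)/\Br(k)$ is an isomorphism by Proposition~\ref{Proposition:productofcohomology}(c), while the induced arrow $\Brnr(\Sym^n_{X/k})/\Br(k)\to\Brnr(\Sym^n_{(X\times\PP^1)/k})/\Br(k)$ is an isomorphism because the forgetful morphism $\Sym^n_{(X\times\PP^1)/k}\to\Sym^n_{X/k}$ has $k(\xi)$-rational generic fiber (a Weil restriction of $\PP^1$), so it induces an isomorphism on unramified Brauer groups. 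Since $\lambda_{X\times\PP^1}$ is an isomorphism, commutativity of the square forces $\lambda_X$ to be an isomorphism, as claimed.

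I expect the main obstacle to be the right-hand vertical map: establishing that $f_n$ is genuinely split in codimension $1$ over $\PP^n=\Sym^n_{\PP^1/k}$—in particular controlling the fibers over the discriminant divisor, where two of the $n$ points collide—so that restriction to the generic fiber is injective on the Brauer quotient. A secondary technical issue is the passage between the (generally singular) symmetric products and smooth models when applying Lemma~\ref{Lemma:injective}; I would finesse this by working with the smooth open locus $\Sym^{n,o}$ together with the unramified Brauer group, which is insensitive to the choice of smooth proper model.
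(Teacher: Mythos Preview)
Your overall strategy is exactly the one the paper uses: apply Proposition~\ref{Proposition:commutativeforBrnr} to the projection $X\times\PP^{1}\to\PP^{1}$, identify the generic fiber of $f_{n}$ with a Weil restriction, use Lemma~\ref{Lemma:Brauerregularextension} and Proposition~\ref{Proposition:WeirestricionBrauer} to make the left and bottom arrows isomorphisms, show the right arrow is injective, and then transfer from $X\times\PP^{1}$ back to $X$. Your diagram chase and your handling of the left and bottom arrows are correct and match the paper.

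There is, however, a genuine gap precisely where you flag it. Your sentence ``the generic fiber $\Res_{L/k(\xi)}(X_{L})$ is geometrically integral, so $f_{n}$ is split in codimension~$1$'' is a non sequitur: geometric integrality of the \emph{generic} fiber says nothing about the fibers over the codimension-$1$ points of $\PP^{n}$, and it is exactly over the discriminant divisor that the naive picture could fail. The paper does not try to analyze those fibers directly; instead it imports \cite[Lemma~2.5]{CZ24}, which produces a smooth projective model $Z$ of $\Sym_{X\times\PP^{1}/k}^{n}$ together with an actual morphism $Z\to\PP^{n}_{k}$ that is split over every codimension-$1$ point. With that model in hand, Lemma~\ref{Lemma:injective} applies verbatim and gives the required injectivity of the right vertical map. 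So the missing ingredient in your proposal is this external input, not any further argument about $f_{n}$ itself.

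For the transfer step back to $X$, your argument via the rational generic fiber of $\Sym^{n}_{(X\times\PP^{1})/k}\to\Sym^{n}_{X/k}$ is essentially a proof that the two symmetric products are stably birational; the paper obtains the same conclusion by quoting \cite[Lemma~7]{Kol18} directly and then invoking the stable birational invariance of $\Brnr$ together with Proposition~\ref{Proposition: CommutativeforBrauer}. Either route works.
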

	
	\begin{proof}
		Set \(Y := X \times \PP^{1}\) and consider the natural projection \(\pr \colon Y \to \PP^{1}\). To this trivial fibration, we apply 
		Proposition~\ref{Proposition:commutativeforBrnr} and obtain a commutative diagram
		\begin{equation}\label{Diagram:main}
		\begin{tikzcd}[row sep=large, column sep=large]
			\Br(Y)/\Br(k) \arrow[r, "\lambda"] \arrow[d] &
			\Brnr\!\bigl(\Sym_{Y}^{n}\bigr)/\Br(k) \arrow[d] \\
			\Br(X \otimes_{k} L)/\Br(L) \arrow[r, "\phi"] &
			\Br\!\bigl(\Res_{L/k(\xi)}(X \otimes_{k} L)\bigr)/\Br(k(\xi))
		\end{tikzcd}
		\end{equation}
		where \(\xi\) denotes the generic point of \(\Sym_{\PP^{1}}^{n}\).  
		
		Because \(X/k\) is geometrically integral, the extension \(L/k\) is regular. 
		Hence, by Lemma~\ref{Lemma:Brauerregularextension}(c),
		\[
		\Br(X)/\Br(k) \simeq \Br(X_{L})/\Br(L),
		\]
		so the left vertical arrow is an isomorphism. The Lemma~\ref{Lemma:Brauerregularextension}(c) also tells us that 
		\(\epsilon_{X_{L}}\) is an isomorphism. Notice that Lemma~\ref{Lemma:torsionfreepicard} ensures that $\Pic(X_{\overline{L}})$ is also torsion-free, therefore Proposition~\ref{Proposition:WeirestricionBrauer}(c) 
		implies that the bottom  arrow \(\phi\) is an isomorphism.
		
		By \cite[Lemma 2.4]{Kol18}, the symmetric products \(\Sym_{X}^{n}\) 
		and \(\Sym_{Y}^{n}\) are stably birational equivalent.  If we can show that the right vertical arrow is injective, then the top horizontal 
		arrow \(\lambda\) must also be an isomorphism. Combining this with the stable 
		birational equivalence and Proposition~\ref{Proposition: CommutativeforBrauer} applied to $\textup{pr}_X:Y=X \times \PP^{1}\to X$, the top row of Diagram \eqref{Diagram:main} reduces to an isomorphism
		\[
		\lambda \colon \Br(X)/\Br(k) \xrightarrow{\ \simeq\ } 
		\Brnr\!\bigl(\Sym_{X}^{n}\bigr)/\Br(k)
		\]
		as defined  in \eqref{*}.

		It remains to prove the injectivity of the right vertical homomorphism in Diagram \eqref{Diagram:main}.  
		Consider the natural projection \(\pr \colon Y \to \PP^{1}\). 
        By Hironaka's resolution of singularity, we have a smooth projective geometrically integral model $W$ of $\Sym_{Y}^{n}$ such that the rational map $\hat{f}_n:W\dashrightarrow \PP^n$ defined via $f_n:\Sym_{Y}^{n}\to\Sym_{\PP^{1}}^{n}=\PP^{n}$ is a morphism.     According to \cite[Lemma 3.3]{CZ24}, the fiber of $\hat{f}_n$ over any codimension \(1\) point of \(\PP^{n}\) is split.
		This yields a commutative diagram of complexes whose bottom row is exact, cf. \cite[Diagram (11.3)]{CTS21}
		\begin{equation*}
				\begin{tikzcd}[row sep=large, column sep=small]
				0 \arrow[r] & 
				\Br(W) \arrow[r] & 
				\Br(W_{\xi}) \arrow[r] & 
				\displaystyle\bigoplus_{P\in\PP^{n,(1)}}\bigoplus_{V\subset W_P}
				\homology^{1}\bigl(k(V),\bQ/\bZ\bigr) \\
				0 \arrow[r] & 
				\Br(\PP^{n}) \arrow[r] \arrow[u, "\hat{f}^{*}_n"] & 
				\Br(k(\xi)) \arrow[r] \arrow[u, "\hat{f}_{n\xi}^{*}"] & 
				\displaystyle\bigoplus_{P\in\PP^{n,(1)}}
				\homology^{1}\bigl(k(P),\bQ/\bZ\bigr) \arrow[u, "{e_V\cdot\textup{res}}"']
			\end{tikzcd}
			\label{Commutetativediagram}
		\end{equation*}

		It follows from  \cite[Lemma 11.1.2]{CTS21} and diagram chasing that 
		\begin{equation*}
			\Br(W)/\Br(\PP^n)\hookrightarrow \Br(W_{\xi})/\Br(k(\xi))
		\end{equation*} 
        is a monomorphism. The Weil restriction $\Res_{L/k(\xi)}(X \otimes_{k} L)$, as the generic fiber of $f_n$ according to Formula \eqref{Equation:Weil}, is birational equivalent to the generic fiber $W_\xi$ of $\hat{f}_n$. Then the last monomorphism becomes 
        $$\Brnr(\Sym_{Y}^{n})/\Br(k)\to\Br(\Res_{L/k(\xi)}(X \otimes_{k} L))/\Br(k(\xi))$$ 
        which is the right vertical arrow in Diagram \eqref{Diagram:main}.
	\end{proof}

	\section{Hasse principle and weak approximation}\label{Section:Brauermanin}
	
\subsection{Effective 0-cycles and rational points of symmetric products}\label{Section:ratpt0cyc}

It is well-known that there is a bijection between the set of $k$-rational points of the symmetric product $\Sym_{X}^{n}$ and the set of  degree $n$ effective 0-cycles on $X$. In \cite[paragraphs before Proposition 2.2]{Fogarty}, John Fogarty explained this bijection in a formal way that we are going to reinterpret as follows.

 Recall that we  have a couple of morphisms
	\begin{equation*}
		X \xleftarrow{\pr_1} X^n /H_1 \xrightarrow{u} \Sym_{X}^{n} \label{correspondandceofpointsandcycles}
	\end{equation*}

With the help of these morphisms, we claim that the bijection concerned  can be written as
\begin{align*}
    \Lambda:\Sym_{X}^{n}(k)&\to Z_{0}^{n,\textup{eff}}(X) \\
    p&\mapsto \pr_{1*}(u^{*}(p)).
\end{align*}

It seems that the construction of  $\Lambda$ destroys the symmetry of the bijection in our mind, but we are going to explain in detail that it matches our impression of this bijection.

To simplify the notation, let us look at the simplest non-trivial case with $n=3$ for demonstration, while the statements will be written in full generality. 
The surjection 
$$X(\overline{k})\times X(\overline{k})\times X(\overline{k})\to X(\overline{k})\times X(\overline{k})\times X(\overline{k})/S_3=\Sym_{X}^{n}(\overline{k})$$
maps a $\overline{k}$-rational point $(p_1,p_2,p_3)$ to its $S_3$-orbit
$$S_3(p_1,p_2,p_3)=\{(p_1,p_2,p_3),(p_1,p_3,p_2),(p_2,p_1,p_3),(p_2,p_3,p_1),(p_3,p_1,p_2),(p_3,p_2,p_1)\}$$
viewed as a $\overline{k}$-rational point of $\Sym_{X}^{n}$. We should remark that elements in the orbit may not necessarily be distinct, which happens when $p_i=p_j$ for some certain $i\neq j$. We call it inseparable if such situation happens.
The absolute Galois group $\Gamma=\textup{Gal}(\overline{k}/k)$ acts equivariantly on both sides of the surjection. In other words, the $\Gamma$-action and the $S_n$-action on $X^n(\overline{k})$ are compatible in the sense that for any $\gamma\in\Gamma$ and any $\sigma\in S_n$ we have $\gamma\sigma(p_1,p_2,p_3)=\sigma\gamma(p_1,p_2,p_3)$. This makes the set of $S_n$-orbits (i.e. the set $\Sym_{X}^{n}(\overline{k})$ of $\overline{k}$-rational points) decompose as a disjoint union of $\Gamma$-orbits.
The $S_3$-orbit $S_3(p_1,p_2,p_3)$ represents a $k$-rational point $p$ of $\Sym_{X}^{3}$ if and only if it is fixed by the $\Gamma$-action.
In general, each $\Gamma$-orbit of $\Sym_{X}^{n}(\overline{k})$ is a closed point whose degree equals to the number of elements of the orbit. Similar situation happens on $(X^n/H_1)(\overline{k})$ and $X(\overline{k})$ as well.

Now $H_1=\langle\tau\rangle\simeq S_{n-1}$ is generated by the transposition $\tau=(23)\in S_3$, which acts on $X(\overline{k})\times X(\overline{k})\times X(\overline{k})$ by exchanging the 2nd and the 3rd coordinate of each triple. 

We start from a $k$-rational point $p\in \Sym_{X}^{n}(k)$ regarded as a $\overline{k}$-rational point and a closed point as above, i.e. an $S_3$-orbit $ S_3(p_1,p_2,p_3)$ fixed by $\Gamma$.
The inverse image $u^{-1}(p)=\{q_1,q_2,q_3\}$ is a set of $\overline{k}$-rational points of $X^n/H_1$. Written in the form of $H_1$-orbits, we have  
\begin{align*}
    q_1&=\{(p_1,p_2,p_3),(p_1,p_3,p_2)\}=H_1(p_1,p_2,p_3)=H_1(p_1,p_3,p_2), \\
    q_2&=\{(p_2,p_1,p_3),(p_2,p_3,p_1)\}=H_1(p_2,p_1,p_3)=H_1(p_2,p_3,p_1), \\
    q_3&=\{(p_3,p_1,p_2),(p_3,p_2,p_1)\}=H_1(p_3,p_1,p_2)=H_1(p_3,p_2,p_1).
\end{align*}
Some of them may coincide to each other in the inseparable situation.
From the list, we understand that the elements in each $H_1$-orbit $q_i$ is determined by their 1st coordinates $p_i$, while the other $n-1$ coordinates are obtained via all possible permutations of $(p_1,\dots,p_{i-1},p_{i+1},\ldots,p_n)$ with $p_i$ omitted.
In other words, $p_i$ completely determines $q_i$. And the 1st coordinate of $q_i$ is $p_i$, which does not depend on the choice of representatives of the associated $H_1$-orbit. 
This is still the case for the inseparable situation.

Now we consider $\Gamma$-actions on these elements. Since $p=S_3(p_1,p_2,p_3)$ is a $k$-rational point of $\Sym_{X}^{n}$, the $\Gamma$-stable set $u^{-1}(p)=\{q_1,q_2,q_3\}$ decomposes as a disjoint union of $\Gamma$-orbits. Each $\Gamma$-orbit represents a closed point of $X^n/H_1$. As a 0-cycle, $u^{*}(p)$ is the formal sum of these closed points
$$u^*(p)=\sum_{\Gamma q_i\in |X^n/H_1|} m(\Gamma q_i)\Gamma q_i,$$
where the second sum is taken over the set $|X^n/H_1|$ of closed points. 
Here the multiple 
\begin{align*}
   m(\Gamma q_i)&=|\{j\in\ZZ~|~1\leq j\leq n\mbox{ and }~q_j=q_i\in (X^n/H_1)(\overline{k})\}| \\
     &=|\{j\in\ZZ~|~1\leq j\leq n\mbox{ and }~p_j=p_i\in X(\overline{k})\}|,
\end{align*}
where the second equality follows from the fact explained above that $q_i$ and $p_i$ determine each other. This multiple is well-defined, i.e. it does not depend on the choice of representatives in the $\Gamma$-orbit $\Gamma q_i$, because  $\Gamma$ acts component by component. It also follows that  the 1st coordinates of the elements in the $\Gamma$-orbit $\Gamma q_i$ of $(X^n/H_1)(\overline{k})$ form exactly the $\Gamma$-orbit $\Gamma p_i$  of $X(\overline{k})$, which represents a closed point of $X$. So the image under $\pr_1$ of the closed point $\Gamma q_i$ is $\Gamma p_i$. 
Moreover, $\Gamma q_i$ and $\Gamma p_i$ have the same residue field since the stabilizer subgroups in $\Gamma$ of the $\overline{k}$-rational points $q_i$ and $p_i$ are equal. 
Therefore the 0-cycle 
$$\Lambda(p)=\pr_{1*}(u^{*}(p))
=\sum_{\Gamma p_i\in |X|} m(\Gamma q_i)\Gamma p_i$$
is exactly the one in our mind given by collecting the closed points corresponding to those $p_i$.
The 0-cycle $\Lambda(p)$ is effective of degree 
$$\sum_{\Gamma p_i\in |X|} m(\Gamma q_i)[k(\Gamma p_i):k]=\sum_{\Gamma p_i\in |X|} m(\Gamma q_i)|\Gamma p_i|=\sum_{p_i\in X(\overline{k})}m(\Gamma q_i)\cdot1=\sum_{i=1}^n 1=n.$$

\begin{remark}\label{remarklambda}
The construction of $\Lambda$ does play a key role in our paper. First of all, it makes the correspondence between $\Sym_{X}^{n}(k)$ and $Z_{0}^{n,\textup{eff}}(X)$  structured. Then it leads us to define the comparison homomorphism $\lambda$ in \eqref{*} between Brauer groups $\Br(X)$ and $\Brnr(\Sym_{X}^{n})$. It is the most reasonable way, for the purpose of studying Brauer--Manin obstructions, to define $\lambda$ as the composition of $\pr_1^*$ and the corestriction $u_*$ comparing to $\Lambda=\pr_{1*}\circ u^*$. 
\end{remark}

When the $p_i$'s are all distinct, the corresponding 0-cycle is separable with multiples $m(\Gamma q_i)=1$, 
and the formulae is simplified a lot. In the following applications, only the separable situation appears.

\subsection{Arithmetic applications to the local-global principle}

	We are ready to apply the results in Section \ref{Section:Symmetricproduct} on the unramified Brauer group of symmetric products to obtain arithmetic results.
	
	\begin{theorem}\label{thm:main}
		Let \(k\) be a number field and let \(X\) be a smooth, projective, geometrically integral variety over \(k\) such that \(\Pic(X_{\overline{k}})\) is a torsion‑free abelian group. Then the following hold.
		\begin{enumerate}[(a)]
			\item Under the identification
			\[
			\mathrm{Z}_{0,\A}^{n,\mathrm{sep},\mathrm{eff}}(X)=\prod_{v\in\Omega_k}\Sym_{X}^{n,o}(k_v)
			\]
			we have an equality of Brauer--Manin subsets
			\[
			\mathrm{Z}_{0,\A}^{n,\mathrm{sep},\mathrm{eff}}(X)^{\Br}= 
			\Bigl(\prod_{v\in\Omega_k}\Sym_{X}^{n,o}(k_v)\Bigr)^{\Brnr}.
			\]
			\item The variety \(X\) satisfies $\text{HPBM0cyc}^{n,\mathrm{eff,sep}}$ if and only if \(\Sym_{X}^{n,o}\) satisfies HPBMrp with respect to the unramified Brauer group.
			\item If \(\Sym_{X}^{n,o}\) satisfies the WABMrp, then \(X\) satisfies the $\text{WABM0cyc}^{n,\mathrm{eff,sep}}$.
			\item Assume moreover that \(\Br(X)/\Br(k)\) is finite. 
If $\Sym_{X}^{n,\mathrm{sm}}$ satisfies HPBMrp (resp. WABMrp) with respect to the unramified Brauer group for $n$ sufficiently large, then \(X\) satisfies $\text{HPBM0cyc}^{1}$(resp. $\text{WABM0cyc}^{1}$).
		\end{enumerate}
		
	\end{theorem}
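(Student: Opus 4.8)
The plan is to deduce all four assertions from Theorem~\ref{theorem} (the isomorphism $\lambda$) together with the local description of the Brauer--Manin pairing for $0$-cycles in \cite[Chapter 6]{CTS21}. For part (a) the essential point is a place-by-place compatibility. I would fix $v\in\Omega_k$ and a point of $\Sym_{X/k}^{n,o}(k_v)$, which is an effective separable $0$-cycle $z_v=\sum_j Q_j$ of degree $n$ on $X_{k_v}$, the $Q_j$ being distinct closed points with $\sum_j[k_v(Q_j):k_v]=n$. Since $\lambda$ is by construction the pullback $\pr_1^{*}$ followed by the corestriction along the finite étale cover $(X^{n}\setminus\Delta)/H_1\to\Sym_{X/k}^{n,o}$ of \eqref{dualmap}, pulling back a class $b\in\Br(X)$ to the fibre over $z_v$ --- namely $\coprod_j\Spec k_v(Q_j)$ --- and using the compatibility of corestriction with base change yields
\[
\lambda(b)(z_v)=\sum_{j}\cor_{k_v(Q_j)/k_v}\bigl(b(Q_j)\bigr),
\]
which is exactly the local term of the Brauer--Manin pairing of the $0$-cycle $z_v$ against $b$. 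Applying $\operatorname{inv}_v$, summing over all $v$, and invoking the surjectivity of $\lambda$ onto $\Brnr(\Sym_{X/k}^{n})/\Br(k)$ from Theorem~\ref{theorem} then identifies the two orthogonal complements. The only real work here is the corestriction--evaluation compatibility, which I expect to be a formal consequence of the functoriality in \cite[Proposition 3.8.1]{CTS21}.

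Parts (b) and (c) then follow almost formally. For (b), the identification in (a) shows that $Z_{0,\A}^{n,\mathrm{sep},\mathrm{eff}}(X)^{\Br}\neq\emptyset$ iff $\Sym_{X/k}^{n,o}(\A_k)^{\Brnr}\neq\emptyset$, while rational points of $\Sym_{X/k}^{n,o}$ are precisely the effective separable $0$-cycles of degree $n$; comparing the two implications gives the stated equivalence. For (c), I would begin with a family $\{z_v\}$ of local effective separable $0$-cycles orthogonal to $\Br(X)$; by (a) this is a point of $\Sym_{X/k}^{n,o}(\A_k)^{\Brnr}$, hence by WABMrp lies in the closure of $\Sym_{X/k}^{n,o}(k)$, and can be approximated at the places of $S$ by a global effective separable $0$-cycle $z$ in the $v$-adic topology of $\Sym_{X/k}^{n,o}(k_v)$. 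I then need the local statement that $v$-adically close points of $\Sym_{X/k}^{n,o}(k_v)$ have the same image in $\CH_0(X_{k_v})/\delta$; this local constancy of the cycle-class map modulo $\delta$ converts $v$-adic approximation into the $\CH_0/\delta$-approximation demanded by $\text{WABM0cyc}^{n,\mathrm{eff,sep}}$.

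Part (d) is where the substance lies. Since $\Sym_{X/k}^{n,o}$ is the locus of free $S_n$-action, it is a common smooth open subscheme of $\Sym_{X/k}^{n}$ and of any model $\Sym_{X/k}^{n,\mathrm{sm}}$, with $\Brnr(k(\Sym_{X/k}^{n})/k)=\Br(\Sym_{X/k}^{n,\mathrm{sm}})$, finite by Theorem~\ref{theorem} and the hypothesis on $\Br(X)/\Br(k)$; thus the choice of model is immaterial by Lang--Nishimura and \cite[Proposition 6.1]{CPS16}. Starting from a family $\{z_v\}$ of local $0$-cycles of degree $1$ orthogonal to $\Br(X)$, I would manufacture, for all large $n$, an adelic effective separable $0$-cycle of degree $n$ still orthogonal to $\Br(X)$; by (a) it defines a point of $\Sym_{X/k}^{n,o}(\A_k)^{\Brnr}\subseteq\Sym_{X/k}^{n,\mathrm{sm}}(\A_k)^{\Br}$, so HPBMrp for $\Sym_{X/k}^{n,\mathrm{sm}}$ yields a rational point, i.e.\ a global effective $0$-cycle of degree $n$ on $X$. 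Carrying this out for two consecutive degrees $n$ and $n+1$ and taking a $\ZZ$-linear combination (with $\gcd(n,n+1)=1$) produces a global $0$-cycle of degree $1$, giving $\text{HPBM0cyc}^{1}$. The WABMrp case runs in parallel, tracking approximation at $S$ modulo $\delta$ throughout and invoking (c), after first passing from WABMrp on $\Sym_{X/k}^{n,\mathrm{sm}}$ to WABMrp on the open subset $\Sym_{X/k}^{n,o}$ using density of the separable locus.

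The main obstacle is the moving lemma underlying (d): from a degree-$1$ adelic $0$-cycle orthogonal to $\Br(X)$ I must build, for large $n$, an adelic effective separable $0$-cycle of degree $n$ that is simultaneously effective and separable at every place, of the prescribed class modulo the exponent $m$ of $\Br(X)/\Br(k)$ so that orthogonality is preserved (additions of a fixed global effective cycle being harmless by reciprocity), and, in the WABM setting, close to the given data at the finitely many places of $S$ modulo $\delta$. Coordinating effectivity, separability, degree, and the $\CH_0/m$-class across all places, only finitely many of which are constrained, is the technical heart; I expect to assemble it from a local moving/Bertini argument over each $k_v$ together with a global reciprocity bookkeeping, in the spirit of the $0$-cycle arguments of \cite{Lia13}.
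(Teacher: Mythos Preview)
Your outline for parts (a)--(c) is essentially the paper's proof: the key identity $\lambda(b)(z_v)=\sum_j\cor_{k_v(Q_j)/k_v}(b(Q_j))$ is exactly what the paper encodes in the formula $\langle p_v,\lambda(c)\rangle=\langle \pr_{1,*}(u^*(p_v)),c\rangle$, and (b), (c) follow formally, with the local constancy step in (c) being \cite[Lemma~1.8]{Wit12} in the paper.

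For part (d) your route diverges from the paper's in two respects. First, the paper does \emph{not} work on $X$ directly: it passes to $X\times\PP^1_k$ at the outset (the reduction is harmless since $\Br(X)=\Br(X\times\PP^1)$ and projection to $X$ preserves degree), and then uses the stable birational equivalence $\Sym^{n}_{X\times\PP^1}\sim_{\mathrm{bir}}\Sym^{n}_{X}\times\PP^{n}$ (Koll\'ar) together with finiteness of $\Br/\Br(k)$ to transfer HPBMrp/WABMrp from $\Sym^{n,\mathrm{sm}}_{X}$ to $\Sym^{n,\mathrm{sm}}_{X\times\PP^1}$. The point of the $\PP^1$ factor is that the moving step---replacing an arbitrary local $0$-cycle by an effective separable one of a prescribed large degree while preserving the Brauer pairings and the $\CH_0/\delta$-class---can be quoted directly from \cite{Lia13}, where it is proved using the fibration over $\PP^1$. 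Second, rather than two consecutive degrees, the paper takes a \emph{single} large degree $\Delta\equiv 1\pmod{m\delta\delta_P}$ (with $m$ the exponent of $\Br(X)/\Br(k)$ and $P$ a fixed global closed point) and corrects the degree afterwards by subtracting a multiple of $P$.

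Your approach is not wrong, but the step you flag as ``the main obstacle'' is genuinely where the work lies, and your sketch understates what is needed. On $X$ alone (no $\PP^1$), the assertion that every $\CH_0$-class of large degree contains an effective separable representative is not a one-line Bertini statement: one has to pass through a smooth complete intersection curve $C_v\subset X_{k_v}$ containing the support of $z_v$ and then invoke Riemann--Roch on $C_v$, with the threshold for $n$ depending on $g(C_v)$. This is fine because $S$ is finite, but it should be made explicit. By contrast, the paper's $\PP^1$ reduction turns the moving lemma into a citation. Your two-degree trick $z^{(n+1)}-z^{(n)}$ is a valid alternative to the paper's single-degree-plus-closed-point correction, and for WABM it works provided you build the degree-$n$ family from $nz_v$ (so that the difference recovers $z_v$ modulo $\delta$); this again requires the rational-equivalence version of the local moving lemma, not just matching Brauer pairings.
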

	
	\begin{proof}
		
	\begin{enumerate}[(a)]
			\item Consider the couple of  morphisms
		\[
		X \xleftarrow{\mathrm{pr}_1} (X\setminus\Delta)/H_1 \xrightarrow{u} \Sym_{X}^{n,o}.
		\]
Take a family
			\(\{z_v\}_{v\in\Omega_k}\) of degree \(n\) effective separable local \(0\)-cycles.
			Each \(z_v\) can be written as \(\mathrm{pr}_{1*}(u^*(p_v))\) for a unique
			\(p_v \in \Sym_{X}^{n,o}(k_v)\). According to Theorem~\ref{theorem}, any element
			\(b' \in \Brnr(\Sym_{X}^{n})\) is written as a sum 
			\(b' = \lambda(b) + b_0\) for some certain \(b_0 \in \Br(k)\) and \(b \in \Br(X)\). Then
			\[
			\sum_{v\in\Omega_k} \langle p_v, b' \rangle
			= \sum_{v\in\Omega_k} \langle p_v, \lambda(b)+b_0 \rangle
			=\sum_{v\in \Omega_{k}} \langle p_{v}, u_{*}\pr_1^{*}(b)\rangle
			\]
			Because the corestriction homomorphism is compatible with pull-back \cite[Proposition 3.8.1]{CTS21}, we have
			\begin{equation*}
				\sum_{v\in \Omega_{k}}\langle p_{v}, u_{*}\pr_1^{*}(b)\rangle =\sum_{v \in \Omega_k} \langle u^{*}(p_{v}),\pr_1^{*}(b)\rangle
			\end{equation*}
			Then by functoriality, we have
			\begin{equation*}
				\sum_{v \in \Omega_k} \langle u^{*}(p_{v}),\pr_1^{*}(b)\rangle = \sum_{v \in \Omega_k}\langle \pr_{1*}u^{*}(p_{v}),b\rangle =\sum_{v \in \Omega_k}\langle z_{v},b \rangle 
			\end{equation*}
			Whence 
			\begin{equation*}
				\sum_{v \in \Omega_k} \langle p_{v},b' \rangle =\sum_{v \in \Omega_k} \langle z_{v},b\rangle
			\end{equation*}
			and thus the equality
			\[
			\mathrm{Z}_{0,\A}^{n,\mathrm{sep},\mathrm{eff}}(X)^{\Br}= 
			\Bigl(\prod_{v\in\Omega_k}\Sym_{X}^{n,o}(k_v)\Bigr)^{\Brnr}.
			\]
			
			\item It follows directly from (a).
			
			\item Let \(\{z_v\}\) be a family of separable effective \(0\)-cycles of degree
			\(n\) orthogonal to \(\Br(X)\). Let \(S\subset\Omega_k\) be a finite set and
			\(\delta\) a positive integer. Let \(\{p_v\}\) be the corresponding local
			points on \(\Sym_{X}^{n,o}\). For any \(b'\in\Brnr(\Sym_{X}^{n})\) written as
			\(b'=\lambda(b)+b_0\) as before, we have
			\[
			\sum_{v\in\Omega_k} \langle p_v, b'\rangle
			= \sum_{v\in\Omega_k} \langle z_v, b \rangle = 0.
			\]
			Hence \(\{p_v\}\) is orthogonal to \(\Brnr(\Sym_{X}^{n})\). By the hypothesis that
			\(\Sym_{X}^{n,o}\) satisfies WABMrp, we can find a global point
			\(p\in\Sym_{X}^{n,o}(k)\) arbitrarily close to each \(p_v\) for \(v\in S\).
			Let \(z\) be the separable effective \(0\)-cycle of degree \(n\) on \(X\)
			corresponding to \(p\). According to \cite[Lemma 1.8]{Wit12}, after choosing
			\(p\) sufficiently close to the local points, the cycle \(z\) has the same
			image in \(\CH_0(X)/\delta\) as \(z_v\) for every \(v\in S\). Thus \(X\)
			satisfies $\text{WABM0cyc}^{n,\mathrm{eff,sep}}$.
			
			\item Under the additional hypothesis, the Brauer group \(\Brnr(\Sym_{X}^{n})/\Br(k)\) is finite.
			Following the argument of \cite[Theorem 3.2.1]{Lia13}, it suffices to prove
			$\text{HPBM0cyc}^1$ (resp. $\text{WABM0cyc}^{1}$) for the variety \(X\times\PP^1\). Let \(\{z_v\}\) be a
			family of \(0\)-cycles of degree \(1\) on \(X\times\PP^1\) orthogonal to
			\(\Br(X\times\PP^1)\).
			
			By the Lang–Weil estimates we may enlarge \(S\) so that for every
			\(v\notin S\) the variety \(X_{k_v}\) possesses a \(k_v\)-point \(x_v\).
			Let \(\{b_i\}_{1\le i\le s}\)
			be a set of generators of the finite group \(\Br(X\times\PP^1)/\Br(k)\). These generators are torsion elements because they
			come from a smooth projective variety. Choose a positive integer \(m\) that
			annihilates all of them. Enlarge \(S\) further so that for every \(b_i\) and
			every \(v\notin S\) we have \(\langle z_v, b_i\rangle = 0\). Fix a global
			point $P$ of $X\times \PP^{1}$ and write \(\delta_P = [k(P):k]\).
			
			Fix a positive integer \(\delta\). According to \cite[Theorem 3.2.1]{Lia13}
			we can find, for each \(v\in S\), an effective separable \(0\)-cycle
			\(z_v^1\) of degree \(\Delta\) (a same integer for all \(v\in S\)) such that
			\(\Delta\equiv1\pmod{m\delta\delta_P}\), the cycles \(z_v\) and \(z_v^1\) have
			the same image in \(\CH_0(X\times\PP^1)/\delta\), and
			\(\langle z_v,b_i\rangle = \langle z_v^1,b_i\rangle\) for every \(i\).
			For places \(v\notin S\), the existence of a \(k_v\)-point on \(X_v\) and the
			fact that \(\PP^1_{k_v}\) admits separable effective \(0\)-cycles of any degree
			allow us to choose an effective separable \(0\)-cycle \(z_v^1\) on
			\(X\times\PP^1\) of the same degree \(\Delta\) with $\Delta$ sufficently large. The family
			\(\{z_v^1\}_{v\in\Omega_k}\) therefore consists of effective separable
			\(0\)-cycles of constant degree \(\Delta\). It corresponds to a family of local points
			\(\{p_v\}\) of \(\Sym_{X\times\PP^1}^{\Delta,o}\) that is orthogonal to
			\(\Brnr(\Sym_{X\times\PP^1}^{\Delta})\).
			
			Moreover, because \(\Sym_{X}^{\Delta,\mathrm{sm}}\) satisfies HPBMrp (resp. WABMrp)
			by assumption, the product \(\Sym_{X}^{\Delta,\mathrm{sm}}\times\PP^{\Delta}\) also
			satisfies HPBMrp (resp. WABMrp). By stable birational equivalence (\cite[Lemma 2.4]{Kol18}) and the finiteness of
			relevant Brauer groups, the same property holds for
			\(\Sym_{X\times\PP^1}^{\Delta,\mathrm{sm}}\).
			
			For the Hasse principle, the existence local points \(\{p_v\}\) on
			\(\Sym_{X\times\PP^{1}}^{\Delta,\mathrm{sm}}\) gives a \(k\)-rational point on $\Sym_{X\times \PP^{1}}^{\Delta}$ which corresponds to a
			global effective \(0\)-cycle \(z\) of degree \(\Delta\) on
			\(X\times\PP^1\). Since \(\Delta\equiv1\pmod{m\delta\delta_P}\), a suitable
			linear combination of \(z\) and \(m\delta\) copies of the cycle $P$
			yields a global \(0\)-cycle of degree \(1\).
			
			For weak approximation, we are given local points of
			\(\prod_{v\in\Omega_k}\Sym_{X\times\PP^1}^{\Delta,\mathrm{sm}}(k_{v})\) that lies in \(\prod_{v\in\Omega_k}\Sym_{X\times\PP^1}^{\Delta,o}(k_{v})\). As in \cite[Proposition 6.1]{CPS16}, we can
			approximate \(\{p_v\}\) by a \(k\)-point \(p\) in \(\Sym_{X\times\PP^1}^{\Delta,o}\)
			such that, by \cite[Lemma 1.8]{Wit12}, the corresponding global separable
			effective \(0\)-cycle \(z\) has the same image in \(\CH_0(X\times\PP^1)/\delta\)
			as \(z_v\) for every \(v\in S\). A suitable linear combination of \(z\) and
			\(\delta\) copies of the cycle $\delta P$ then gives a \(0\)-cycle of degree
			\(1\) that has the same image in \(\CH_0(X\times\PP^1)/\delta\) as the original
			local cycles \(z_v\) for all \(v\in S\).
		\end{enumerate}
	\end{proof}
	\begin{remark}
		Under the identification of Brauer--Manin subsets in (a), the relation of the HPBM between the 0-cycles and the rational points are deduced directly. But for the WABM, we also need to compare topologies of both sides of the bijection 
$$\Lambda: \prod_{v\in\Omega_k}\Sym_{X}^{n}(k_v)\to Z_{0,\A_k}^{n,\textup{eff}}(X)\subset Z_{0,\A_k}^{n}(X).$$
In a recent preprint \cite[Section 5]{HZ25} of the third author, he reformulates the weak approximation property for 0-cycles as a topology on the group of adelic 0-cycles. The lemma  of Olivier Wittenberg \cite[Lemma 1.8]{Wit12} appeared in our proof of (c) asserts  that $\Lambda$ is continuous, or equivalently speaking the product topology on the left is not coarser than that on the right.

	\end{remark}
	
	As another arithmetic applications, we are going to recover a result \cite[Theorem 1.3]{CZ24} of Chen and Zhang, with slightly weaker hypotheses. Before stating the theorem, we recall some definitions and lemmas from \cite{Lia16}.
	
	\begin{definition}[\text{\cite[Definition 3.3]{Lia16}}]
		Let \(d\) be a positive integer and \(X\) be a quasi-projective variety defined over a topological field \(k\). Effective \(0\)-cycles \(z\) and \(z'\) of degree \(d\) on \(X\) correspond to rational points \([z],[z']\in\Sym^{d}_X(k)\). One says that \(z\) is \emph{sufficiently close} to \(z'\) if \([z]\) is sufficiently close to \([z']\) with respect to the topology on \(\Sym^{d}_X(k)\) induced by the topology of \(k\).
	\end{definition}
	
	\begin{lemma}[Relative moving lemma, \text{\cite[Lemma~3.4]{Lia16}}] \label{moving lemma}
		Let \(\pi:X\to\mathbb{P}^1\) be a dominant morphism of algebraic varieties defined over \(\mathbb{R}\), \(\mathbb{C}\), or any finite extension of \(\mathbb{Q}_p\). Assume that \(X\) is smooth.
		Then for any effective \(0\)-cycle \(z'\) on \(X\), there exists an effective \(0\)-cycle \(z\) on \(X\) such that \(\pi_*(z)\) is separable and such that \(z\) is sufficiently close to \(z'\).
	\end{lemma}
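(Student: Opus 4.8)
The plan is to reformulate separability of $\pi_*(z)$ as an \emph{open} condition on the cycle and then establish \emph{density} by an explicit Galois-equivariant analytic perturbation, exploiting that $k$ is a local field of characteristic $0$. Let $d=\deg z'$ and let $\pi_d\colon \Sym_{X/k}^{d}\to \Sym_{\mathbb{P}^1/k}^{d}=\mathbb{P}^{d}$ be the morphism induced by $\pi$ on $d$-th symmetric products, so that $\pi_d([z])=[\pi_*(z)]$. Inside $\mathbb{P}^{d}$ the effective degree-$d$ divisors on $\mathbb{P}^1$ that fail to be separable form the discriminant hypersurface $D$, and $\pi_*(z)$ is separable exactly when $\pi_d([z])\notin D$. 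Since $\pi_d$ is a morphism and $D$ is closed, the set of $[z]\in\Sym_{X/k}^{d}(k)$ with $\pi_*(z)$ separable is the preimage of the open set $(\mathbb{P}^d\setminus D)(k)$, hence is open in $\Sym_{X/k}^{d}(k)$. It therefore suffices to produce, arbitrarily close to $z'$, a single $k$-cycle whose $d$ geometric image points are pairwise distinct.

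To carry this out I would fix a finite Galois extension $K/k$ with group $G=\Gal(K/k)$ splitting $\Supp(z')$, and write $z'=\sum_{i=1}^{d}\overline{P}_i$ as a $G$-stable multiset of $K$-points; as $K$ is again a local field, the analytic implicit function theorem is available over it and over each intermediate field. By generic smoothness (we are in characteristic $0$) there is a dense open $U\subset X$ on which $\pi$ is a smooth morphism; on $F$-analytic points, for any local field $F$, the restriction $\pi|_U$ is then a submersion, hence an open map into $\mathbb{P}^1(F)$. Because the complement of $U$ is a proper closed subvariety, $U(F)$ is dense in the analytic manifold $X(F)$; perturbing each orbit representative within the appropriate $X(k_\alpha)$ and propagating by $G$, I may first replace $z'$ by an arbitrarily close $k$-cycle supported in $U$. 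I then decompose $\{\overline{P}_i\}$ into $G$-orbits $O_1,\dots,O_r$, choose in each a representative of residue field $k_\alpha=K^{G_\alpha}$, and set $\overline{R}_\alpha=\pi(\text{representative})$.

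Using the openness of $\pi|_U$ over the complete field $k_\alpha$, for any target $t_\alpha\in\mathbb{P}^1(k_\alpha)$ close enough to $\overline{R}_\alpha$ one can select $\overline{Q}_\alpha\in U(k_\alpha)$ near the representative with $\pi(\overline{Q}_\alpha)=t_\alpha$; propagating by $G$ gives a $G$-stable multiset $\{g\,\overline{Q}_\alpha\}$, i.e.\ a $k$-rational cycle $z$ close to $z'$, whose geometric image multiset is $\{\,g\,t_\alpha : 1\le\alpha\le r,\ g\in G/G_\alpha\,\}$. It remains to choose the finitely many $t_\alpha$ so that these image points are pairwise distinct. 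The bad configurations are of two types: (i) some $t_\alpha$ is fixed by an element $g\notin G_\alpha$, shortening its $G$-orbit and forcing a repeated image inside $O_\alpha$; and (ii) an inter-orbit collision $g\,t_\alpha=h\,t_\beta$. Each such condition confines $t_\alpha$ (resp.\ the pair $(t_\alpha,t_\beta)$) to the $F$-points of a proper closed subvariety — for (i) the fixed locus of a nontrivial automorphism, which lies in the $\mathbb{P}^1$ of a proper subfield of $k_\alpha$ — and such loci are nowhere dense in the relevant balls. A generic choice of the $t_\alpha$ in small balls around the $\overline{R}_\alpha$ therefore avoids all of them, yielding $d$ distinct geometric image points, hence $\pi_*(z)$ separable, with $z$ as close to $z'$ as desired.

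The main obstacle is precisely this last, equivariant, step: one must force the image cycle to be multiplicity-free while keeping the perturbed cycle $k$-rational, which amounts to separating the images of \emph{Galois-conjugate} points. This rests on two inputs that I would isolate as the technical heart — the openness of $\pi$ on its smooth locus over complete fields (the analytic submersion/implicit function theorem), and the fact that the fixed loci of nontrivial elements of $\Gal(K/k)$, together with the inter-orbit coincidence loci, are nowhere dense — so that genericity inside the small balls of targets suffices to defeat every collision simultaneously.
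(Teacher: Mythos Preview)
The paper does not prove this lemma: it is quoted from \cite{Lia16} (see the sentence introducing the definition and the two lemmas just before Theorem~\ref{thm:generalization}), so there is no in-paper argument to compare against.

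Your argument is correct and is essentially the standard one. The two technical inputs you isolate --- that on the smooth locus $U$ of $\pi$ the analytic implicit function theorem furnishes a local section over each completion $k_\alpha$, and that for a proper subfield $F\subsetneq k_\alpha$ the set $\mathbb{P}^1(F)$ is nowhere dense in $\mathbb{P}^1(k_\alpha)$ --- are exactly what is needed, and the inductive choice of targets $t_\alpha$ avoiding finitely many nowhere-dense loci goes through. Two minor points worth tightening: first, what you actually use is not merely that $\pi|_U$ is open but that it admits a local analytic section through each $k_\alpha$-point (this is the submersion form of the implicit function theorem, which you do invoke); second, the claim that $U(k_\alpha)$ is dense in $X(k_\alpha)$ requires $X\setminus U$ to be a \emph{proper} closed subset of every component carrying points of $z'$, which is automatic when $X$ is irreducible --- an assumption implicit in the paper's applications (where $X$ is geometrically integral) and presumably also in \cite{Lia16}.
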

	
	\begin{lemma}[Hilbert's irreducibility theorem for \(0\)-cycles, \text{\cite[Lemma~3.5]{Lia16}}]\label{Hilbert_irred}
		Let \(S\) be a nonempty finite set of places of a number field \(k\). Let \(\Hil\subset \mathbb{A}^{1}= \PP^{1}\setminus\{\infty\}\) be a generalized Hilbertian subset of \(\PP^{1}\). For each \(v\in S\), let \(z_{v}\) be a separable effective \(0\)-cycle of degree \(d>0\) with support contained in \(\mathbb{A}^{1}\).
		
		Then there exists a closed point \(\theta\in\PP^{1}\) such that
		\begin{itemize}
			\item \(\theta\in\Hil\);
			\item as a \(0\)-cycle, \(\theta\) is sufficiently close to \(z_{v}\) for every \(v\in S\).
		\end{itemize}
	\end{lemma}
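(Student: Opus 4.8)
The plan is to translate the statement into an ordinary Hilbert irreducibility theorem, with prescribed local behaviour, on the affine space of monic polynomials. Recall the identification $\Sym^{d}(\mathbb{A}^{1}) \cong \mathbb{A}^{d}$, sending an effective $0$-cycle of degree $d$ supported in $\mathbb{A}^1$ to the coefficient vector of the monic polynomial whose roots (with multiplicity) are the points of the cycle. Under this identification the separable locus is the open subscheme $U \subset \mathbb{A}^{d}$ on which the discriminant is nonzero, the topology on $\Sym^{d}(\mathbb{A}^{1})(k_v)$ is the $v$-adic topology on $U(k_v) \subset k_v^{d}$, and each $z_v$ corresponds to a point $a_v \in U(k_v)$. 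A \emph{single} closed point $\lambda \in \mathbb{A}^{1} \subset \PP^1$ of degree $d$ corresponds to a point $a \in U(k)$ for which the associated polynomial $g_a$ is irreducible over $k$, and ``$\lambda$ sufficiently close to $z_v$'' becomes ``$a$ is $v$-adically close to $a_v$''. So it suffices to produce $a \in U(k)$, arbitrarily close to $a_v$ for each $v \in S$, such that $g_a$ is irreducible and the closed point it defines lies in $\Hil$.

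I would then set up the incidence cover. Let $\widetilde{U} = \{(a,t) \in U \times \mathbb{A}^{1} : g_a(t)=0\}$, with first projection $\pi \colon \widetilde{U} \to U$ and second projection $\rho \colon \widetilde{U} \to \mathbb{A}^{1}$. Over the separable locus $\pi$ is finite étale of degree $d$, its fibre over $a \in U(k)$ is $\Spec(k[T]/g_a)$, and since the monodromy of the universal monic separable polynomial is the full symmetric group $S_d$, the total space $\widetilde{U}$ is irreducible. Irreducibility of $g_a$ over $k$ is then exactly irreducibility of the fibre $\pi^{-1}(a)$, a Hilbert irreducibility condition on $U$; when it holds, $\rho(\pi^{-1}(a))$ is precisely the closed point $\lambda$ of $\mathbb{A}^1$ attached to $a$. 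Next I would encode membership in $\Hil$: the generalized Hilbertian subset $\Hil \subset \mathbb{A}^1$ is cut out by irreducibility of the fibres of finitely many irreducible covers $W_j \to \PP^1$, so pulling these back along $\rho$ yields covers $W_j \times_{\mathbb{A}^1} \widetilde{U} \to \widetilde{U} \xrightarrow{\pi} U$. The requirement $\lambda \in \Hil$ becomes irreducibility of the fibres of these composite covers over $a$, because the fibre over $a$ is canonically $W_j \times_{\mathbb{A}^1}\Spec k(\lambda)$, i.e.\ the fibre of $W_j$ over the closed point $\lambda$. Intersecting the condition ``$g_a$ irreducible'' with these finitely many conditions produces a single Hilbertian subset $\Hil_U \subset U(k)$, the complement of a thin set in the rational variety $U$.

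The final step is Hilbert irreducibility with weak approximation. Since $U$ is a nonempty open subvariety of $\mathbb{A}^{d}$ it is rational, so $U(k)$ is dense in $\prod_{v\in S}U(k_v)$; by the strong form of Hilbert's irreducibility theorem allowing prescribed local conditions at finitely many places, the complement of a thin set still meets any nonempty system of $v$-adic neighbourhoods. Choosing for each $v\in S$ a sufficiently small neighbourhood $\Omega_v \subset U(k_v)$ of $a_v$, I obtain $a \in \Hil_U \cap \prod_{v\in S}\Omega_v$. This $a$ gives an irreducible monic $g_a \in k[T]$ of degree $d$, whose closed point $\lambda$ lies in $\Hil$ and is $v$-adically close to $z_v$ for every $v \in S$, which is exactly the assertion.

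The main obstacle is the second step: faithfully converting ``$\lambda \in \Hil$'', a statement about fibres of covers over a \emph{closed} point $\lambda$ of $\PP^1$ (equivalently about the extension $k(\lambda)/k$), into a genuine Hilbert irreducibility condition over $U$ through the incidence correspondence $\widetilde{U}$. One must verify that the pulled-back covers $W_j \times_{\mathbb{A}^1}\widetilde{U}$ remain irreducible over $U$, so that the resulting conditions really define a nonempty co-thin Hilbertian set, and that irreducibility of their fibre over $a$ corresponds exactly to irreducibility of $W_j$ over $\rho(\pi^{-1}(a))$; this compatibility, carried out while respecting the weak-approximation constraint at $S$, is where the care is required.
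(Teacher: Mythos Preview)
The paper does not prove this lemma; it is stated as one of the results ``recalled \ldots\ from \cite{Lia16}'', so there is no in-paper argument to compare against. Your proposal therefore has to be judged on its own, and it is essentially correct and is the natural line of argument.

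Your reduction via the coefficient isomorphism $\Sym^{d}(\mathbb{A}^{1})\cong\mathbb{A}^{d}$ and the incidence cover $\widetilde{U}\to U$ is exactly right, and the identification of the fibre of $W_j\times_{\mathbb{A}^1}\widetilde{U}\to U$ over $a$ with $(W_j)_{\lambda}$ (once $g_a$ is irreducible) is clean; note in passing that irreducibility of this composite fibre already forces $\widetilde{U}_a$ to be a single point, so the two conditions you intersect are not independent. The concern you isolate as the main obstacle---irreducibility of $W_j\times_{\mathbb{A}^1}\widetilde{U}$---dissolves once you observe that the projection $\rho\colon\widetilde{U}\to\mathbb{A}^{1}$ has geometrically irreducible fibres: over a geometric point $t$, the fibre $\{a\in U:g_a(t)=0\}$ is an open piece of an affine hyperplane in $\mathbb{A}^{d}$. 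Since $W_j$ is irreducible and dominates $\mathbb{A}^{1}$, while $\rho$ has geometrically integral generic fibre, the fibre product is automatically irreducible. After that, passing to Galois closures shows that the locus of $a\in U(k)$ where some fibre becomes reducible is thin, and the strong form of Hilbert irreducibility with weak approximation on the rational variety $U$ yields the required $a$. Apart from routine bookkeeping (restricting to the open over which all covers are genuinely finite \'etale, and handling the case where $\Hil$ is defined by several covers), nothing is missing.
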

	
	\begin{theorem}\label{thm:generalization}
		Let \(X\) be a smooth, projective, geometrically integral variety over a number field \(k\) such that \(\Pic(X_{\overline{k}})\) is torsion‑free and \(\Br(X)/\Br(k)\) is finite. Assume that there is a finite field extension $L/k$ such that for all the finite field extension $K/k$ linearly disjoint to $L$, denote by $X_{K}$ the base change of $X$ through the finite field extension $K/k$, the natural homomorphism
		\[
		\Br(X)/\Br(k) \longrightarrow \Br(X_{K})/\Br(K)
		\]
		is surjective, and that \(X_{K}\) satisfies HPBMrp (resp.\ WABMrp).
		
		Then for every positive integer \(n\), any smooth projective model of the symmetric product \(\Sym_{X}^{n,\mathrm{sm}}\) satisfies HPBMrp (resp.\ WABMrp).
	\end{theorem}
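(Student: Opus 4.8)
The strategy is to realise $\Sym_{X/k}^{n}$ as a fibration over $\PP^{n}_{k}$ whose fibers over suitable rational points are Weil restrictions of the varieties $X_{k(\theta)}$, and to push the hypotheses through these fibers by means of the generic fiber description obtained in Section~\ref{Section:Symmetricproduct}. First I replace $X$ by $W:=X\times_{k}\PP^{1}_{k}$. By \cite[Lemma 7]{Kol18} the products $\Sym_{X/k}^{n}$ and $\Sym_{W/k}^{n}$ are stably birationally equivalent, and Theorem~\ref{theorem} together with Proposition~\ref{Proposition:productofcohomology}(c) gives
\[
\Brnr\!\bigl(\Sym_{W/k}^{n}\bigr)/\Br(k)\ \simeq\ \Br(W)/\Br(k)\ \simeq\ \Br(X)/\Br(k),
\]
which is finite by hypothesis. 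Hence (Lang--Nishimura for HPBMrp, \cite[Proposition 6.1]{CPS16} for WABMrp, as recorded in Section~\ref{Section:Symmetricproduct}) it suffices to prove the assertion for $\Sym_{W/k}^{n}$.

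The projection $\pr\colon W\to\PP^{1}_{k}$ induces a morphism $\pr_{n}\colon\Sym_{W/k}^{n}\to\Sym_{\PP^{1}/k}^{n}=\PP^{n}_{k}$. Applying the generic fiber analysis of $f_{n}$ with $f=\pr$ identifies the generic fiber of $\pr_{n}$ with the Weil restriction $\Res_{L/k(\xi)}(X_{L})$, where $\xi$ is the generic point of $\PP^{n}_{k}$. A separable closed point $\theta\in\PP^{1}_{k}$ of degree $n$ corresponds to a rational point $[\theta]\in\PP^{n}(k)$, and specialising the above description over $[\theta]$ identifies the fiber $\pr_{n}^{-1}([\theta])$ with $F_{\theta}:=\Res_{k(\theta)/k}(X_{k(\theta)})$, whose set of $k$-points is exactly $X(k(\theta))$. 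This is the bridge of the proof: a $k$-point of $F_{\theta}$ is a $k(\theta)$-point of $X$, hence an effective $0$-cycle of degree $n$ on $W$, i.e.\ a rational point of $\Sym_{W/k}^{n}$. Moreover $(F_{\theta})_{\overline{k}}$ is a product of copies of $X_{\overline{k}}$, hence geometrically integral.

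Now take an adelic point $\{P_{v}\}\in\Sym_{W/k}^{n,\mathrm{sm}}(\A_{k})$ orthogonal to $\Brnr(\Sym_{W/k}^{n})$. I first move it into the dense open $\Sym_{W/k}^{n,o}$, so that the $P_{v}$ become families of local effective separable $0$-cycles of degree $n$ on $W$; by the relative moving lemma (Lemma~\ref{moving lemma}) applied to $\pr$ I may assume each pushforward to $\PP^{1}$ is separable and supported in $\A^{1}$. Hilbert irreducibility for $0$-cycles (Lemma~\ref{Hilbert_irred}) then yields a single closed point $\theta\in\Hil$ of degree $n$ whose associated $0$-cycle is arbitrarily close to these pushforwards for $v\in S$; in particular $[\theta]$ is close to $\pr_{n}(P_{v})$ in $\PP^{n}(k_{v})$. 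Using smoothness of $\pr_{n}$ over the separable locus I lift, for $v\in S$, to local points $x_{v}\in F_{\theta}(k_{v})$ close to $P_{v}$; after enlarging $S$ to contain the archimedean places and the finitely many places of bad reduction, the geometric integrality of $F_{\theta}$ and the Lang--Weil estimate supply points $x_{v}\in F_{\theta}(k_{v})$ for the remaining $v$. Through the adelic identification $\Phi\colon X(\A_{k(\theta)})\simeq F_{\theta}(\A_{k})$ of the Weil restriction, the family $\{x_{v}\}$ becomes an adelic point $\{x_{w}\}\in X(\A_{k(\theta)})$.

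It remains to verify $\{x_{w}\}\in X(\A_{k(\theta)})^{\Br}$ and to conclude. Enlarging $S$ further so that a fixed finite generating set of the (finite) group $\Brnr(\Sym_{W/k}^{n})/\Br(k)$ evaluates trivially at every $v\notin S$ on the smooth model, the orthogonality $\langle\{P_{v}\},\lambda(c)\rangle_{k}=0$ transfers to $\langle\{x_{v}\},\lambda(c)\rangle_{k}=0$ for every $c\in\Br(X)$: at $v\in S$ the invariants agree since $x_{v}$ is close to $P_{v}$ and $\mathrm{inv}_{v}$ is locally constant, while at $v\notin S$ both terms vanish by good reduction. Since $x_{v}\in F_{\theta}$, specialising the commutative diagram of Proposition~\ref{prop:commutativeBrnr} from the generic fiber to $F_{\theta}$ gives $\lambda(c)|_{F_{\theta}}=\phi_{\theta}\bigl(c|_{X_{k(\theta)}}\bigr)$, where $\phi_{\theta}$ is the Weil-restriction isomorphism of Proposition~\ref{Proposition:WeirestricionBrauer}(c); combined with the compatibility of Brauer--Manin pairings under Weil restriction (the remark after Proposition~\ref{Proposition:WeirestricionBrauer}, cf.\ \cite[Section 2.4]{CL22}) this yields $\langle\{x_{w}\},c|_{X_{k(\theta)}}\rangle_{k(\theta)}=\langle\{x_{v}\},\lambda(c)\rangle_{k}=0$. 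By the surjectivity hypothesis every class of $\Br(X_{k(\theta)})/\Br(k(\theta))$ is of the form $c|_{X_{k(\theta)}}$, so $\{x_{w}\}\in X(\A_{k(\theta)})^{\Br}$. Applying HPBMrp (resp.\ WABMrp) for $X_{k(\theta)}$ produces a point of $X(k(\theta))$ (resp.\ one approximating the $x_{v}$ at $v\in S$), hence a rational point of $\Sym_{W/k}^{n}$ (resp.\ one approximating $\{P_{v}\}$), and stable birational invariance transports the conclusion back to $\Sym_{X/k}^{n}$. The main obstacle I anticipate is precisely this orthogonality transfer---justifying the specialisation $\lambda(c)|_{F_{\theta}}=\phi_{\theta}(c|_{X_{k(\theta)}})$ from the generic fiber diagram to the closed fiber and reconciling it with the simultaneous approximation at $v\in S$ and existence of local points at $v\notin S$; the remaining ingredients are an assembly of results already established.
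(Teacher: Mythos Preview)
Your outline is correct and follows essentially the same route as the paper: reduce to $W=X\times\PP^{1}$, push the adelic point into $\Sym_{W/k}^{n,o}$, apply the relative moving lemma and Hilbert irreducibility to land over a closed point $\theta\in\Hil$, lift to local points in the fiber $X_{k(\theta)}$, check orthogonality to $\Br(X_{k(\theta)})$ via the surjectivity hypothesis, and conclude. The paper phrases the fiber step in the language of the local decomposition $\theta_{v}=\sum_{w\mid v}P_{w}$ and closed points $M_{w},M_{w}^{0}$ rather than your Weil-restriction packaging $F_{\theta}=\Res_{k(\theta)/k}(X_{k(\theta)})$, but these are two names for the same thing.

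The one point where the paper is cleaner is exactly the obstacle you flag. You propose to justify $\lambda(c)|_{F_{\theta}}=\phi_{\theta}(c|_{X_{k(\theta)}})$ by specialising the generic-fiber diagram of Proposition~\ref{prop:commutativeBrnr} to the closed fiber over $[\theta]$ and then invoking the Brauer--Manin compatibility of $\Phi$. That specialisation step is not supplied by Proposition~\ref{prop:commutativeBrnr} as stated and would need a separate argument. The paper avoids this entirely: it uses Theorem~\ref{thm:main}(a) (more precisely, the computation in its proof) to identify $\langle p_{v},\lambda(c)\rangle_{v}$ with the $0$-cycle pairing $\langle z_{v},c\rangle_{v}$ on $W$. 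Once your $x_{v}$ lies in the fiber over $\theta$, the corresponding $0$-cycle is $\sum_{w\mid v}M_{w}^{0}$ with $M_{w}^{0}\in X(k(\theta)_{w})$, and that $0$-cycle pairing \emph{is} $\sum_{w\mid v}\mathrm{inv}_{w}\bigl(c(M_{w}^{0})\bigr)$ by definition. So the orthogonality transfer you need is already contained in Theorem~\ref{thm:main}(a), with no specialisation of Proposition~\ref{prop:commutativeBrnr} and no appeal to the Weil-restriction Brauer isomorphism required. Replacing your planned step with this direct identification removes the only genuine gap in your proposal.
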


	\begin{proof}
		First, since \(\Br\!\bigl(\Sym_{X}^{n,\mathrm{sm}}\bigr)/\Br(k)\) is finite, the property that every smooth projective model of \(\Sym_{X}^{n}\) satisfies HPBMrp (resp.\ WABMrp) is equivalent to the existence of at least one smooth projective model of \(\Sym_{X}^{n}\) that satisfies HPBMrp (resp.\ WABMrp). We choose the particular model \(\Sym_{X}^{n,\mathrm{sm}}\) obtained by desingularization. In particular, \(\Sym_{X}^{n,\mathrm{sm}}\) contains \(\Sym_{X}^{n,o}\) as a dense open subset. Moreover, if we can show that \(\Sym_{X\times \PP^{1}}^{n,\mathrm{sm}}\) satisfies HPBMrp (resp.\ WABMrp), the same argument as before will imply the HPBMrp (resp.\ WABMrp) property for \(\Sym_{X}^{n,\mathrm{sm}}\).
		
		Now suppose we are given a family of local points \(\{p_{v}\} \in \prod_{v\in\Omega_k}\Sym_{X\times \PP^{1}}^{n,\mathrm{sm}}(k_{v})\) that is orthogonal to \(\Br\!\bigl(\Sym_{X\times \PP^{1}}^{n,\mathrm{sm}}\bigr)/\Br(k)\), together with a finite set \(S\subset\Omega_{k}\). Since \(\Br(X)/\Br(k)\) is finite, we fix a finite set of generators \(\{b_{i}\}\). Enlarging \(S\) if necessary, we may assume that for every generator \(b_{i}\) and every local point \(x_{v}\in \Sym_{X\times \PP^{1}}^{n,\mathrm{sm}}(k_{v})\) with \(v\notin S\), the pairing \(\langle \lambda(b_{i}), x_{v}\rangle\) vanishes.
		
		We now approximate the family \(\{p_{v}\}\). Following \cite[Proposition 6.1]{CPS16}, we can replace \(\{p_{v}\}\) by a family \(\{q_{v}\}\) whose members all lie in the open subset \(\Sym_{X\times \PP^{1}}^{n,o}\) and still satisfy the orthogonality condition with respect to \(\Brnr\!\bigl(\Sym_{X\times \PP^{1}}^{n,o}\bigr)/\Br(k)\). The family \(\{q_{v}\}\) corresponds to a family of local effective separable \(0\)-cycles \(\{z_{v}\}\) on \(X\times \PP^{1}\). By Theorem~\ref{thm:main}, \(\{z_{v}\}\) is then orthogonal to \(\Br(X\times \PP^{1})\).
		
		Applying Lemma~\ref{moving lemma}, we obtain for each \(v\in S\) an effective separable \(0\)-cycle \(z_{v}'\) that is sufficiently close to \(z_{v}\) and whose image under the natural projection \(\pi: X\times \PP^{1}\rightarrow \PP^{1}\) is separable. Denote the resulting family by \(\{z_{v}^{1}\}\). Because each \(z_{v}^{1}\) is sufficiently close to the original \(z_{v}\) for all \(v\in S\), the new family remains orthogonal to \(\Br(X\times \PP^{1})\).
		
		Consider the push‑forwards \(\pi_{*}(z_{v}^{1})\) for \(v\in S\). They are effective separable \(0\)-cycles of degree \(n\) on \(\PP^{1}\). Let \(\Hil\subset \PP^1\) be the generalised Hilbertian subset defined by the finite cover $\mathbb{A}^1_L\to\mathbb{A}^1$. Applying Lemma~\ref{Hilbert_irred} to \(\Hil\)  and the family \(\{\pi_{*}(z_{v}^{1})\}_{v\in S}\), we obtain a closed point \(\theta\in\Hil\) that is, as a \(0\)-cycle, sufficiently close to \(\pi_{*}(z_{v}^{1})\) for every \(v\in S\).
		
		Write the local decomposition of \(\theta\) at a place \(v\) as \(\theta_{v}=\sum_{w\mid v,\, w\in\Omega_{k(\theta)}} P_{w}\) (using the local class field theory). Because \(\theta_{v}\) is sufficiently close to \(z_{v}^{1}\), we can write \(z_{v}^{1}\) in the form \(z_{v}^{1}=\sum_{w\mid v} Q_{w}\) with \(k(P_{w})=k(Q_{w})=k(\theta)_{w}\). By construction we may further express \(z_{v}^{1}=\sum_{w\mid v} M_{w}\) where each \(M_{w}\) lies in the fiber over \(Q_{w}\) and satisfies \(k(M_{w})=k(\theta)_{v}\). Applying the implicit function theorem, for each \(v\in S\) we can choose a point \(M_{w}^{0}\) in the fiber over \(P_{w}\) that is sufficiently close to \(M_{w}\). For places \(w\notin S\otimes k(\theta)\) we simply fix an arbitrary \(k(\theta)_{w}\)-point \(M_{w}^{0}\) of the fiber \(\pi^{-1}(\theta)=X_{k(\theta)}\). The family \(\{M_{w}^{0}\}\) then corresponds to a family of local points of \(\Sym_{X\times \PP^{1}}^{n,\mathrm{sm}}\) that is itself sufficiently close to the original family \(\{p_{v}\}\).
		
		Observe that because we chose each \(M_{w}^{0}\) sufficiently close to \(M_{w}\) and \(\Br(X)/\Br(k)\) is finite, we may additionally assume that \(\langle b_{i}, M_{w}\rangle = \langle b_{i}, M_{w}^{0}\rangle\) holds for every generator \(b_{i}\). Thus, as an adelic point of \(X_{k(\theta)}\), we have
		\[
		\sum_{w\in\Omega_{k(\theta)}} \langle b_{i}, M_{w}^{0}\rangle = 0
		\]
		for all \(b_{i}\in\Br(X)/\Br(k)\). Since $\theta \in \Hil$, the hypothesis of the theorem guarantees that the natural homomorphism \(\Br(X)/\Br(k)\to\Br(X_{k(\theta)})/\Br(k(\theta))\) is surjective. Therefore the adelic point \(\{M_{w}^{0}\}\) is orthogonal to \(\Br(X_{k(\theta)})/\Br(k(\theta))\).
		
		Finally, because \(X_{k(\theta)}\) satisfies HPBMrp (resp.\ WABMrp) by assumption, we can find a global point of \(X_{k(\theta)}\) (which, in the WABMrp case, can be taken arbitrarily close to the local points \(M_{w}^{0}\) for \(w\in\Omega_{k(\theta)}\)). This global point yields a global point of \(\Sym_{X\times \PP^{1}}^{n,\mathrm{sm}}\) (resp.\ a point that is sufficiently close to the original family \(\{p_{v}\}\)), thereby establishing the HPBMrp (resp.\ WABMrp) property for rational points on \(\Sym_{X\times \PP^{1}}^{n,\mathrm{sm}}\).
	\end{proof}
	
	\begin{remark}
		Let \(X\) be a smooth, projective, geometrically integral variety over a number field \(k\) such that \(\Pic(X_{\overline{k}})\) is torsion‑free. Assume furthermore that \(H^2(X,\mathcal{O}_X)=0\). Then  the Brauer group \(\Br(X)/\Br(k)\) is finite, combining the isomorphism given in Lemma~\ref{Lemma:Brauerregularextension}(c) and the Proposition \cite[Proposition~4.1]{HW16}, there exists a generalized Hilbertian subset \(\Hil\subset\PP^1\) such that for every residue field of a certain closed point of \(\Hil\), the natural restriction homomorphism
		\[
		\Br(X)/\Br(k) \longrightarrow \Br(X_K)/\Br(K)
		\]
		is surjective. Consequently, the hypotheses of Theorem~\ref{thm:generalization} reduce to the following simpler statement:
		
		\begin{quote}
			There exists a generalized Hilbertian subset \(\Hil\subset\PP^1\) such that for every field \(K\) corresponding to a point of \(\Hil\), the variety \(X_K\) satisfies HPBMrp (resp.\ WABMrp).
		\end{quote}
	\end{remark}

		\subsection*{Acknowledgements} The authors would like to thank Yang Cao for his useful advice.

	\bibliographystyle{amsalpha}
	\bibliography{references}
\end{document}